\newcommand{\Z}{\mathbb{Z}}
\newcommand{\C}{\mathbb{C}}
\newcommand{\dX}{-- ++(1, 1.73) -- ++(-1, 1.73)}
\newcommand{\dY}{-- ++(-2, 0) -- ++(-1, -1.73)}
\newcommand{\dZ}{-- ++(1, -1.73) -- ++(2, 0)}
\newcommand{\dx}{-- ++(1, -1.73) -- ++(-1,  -1.73)}
\newcommand{\dy}{-- ++(1, 1.73) -- ++(2, 0)}
\newcommand{\dz}{-- ++(-2, 0) -- ++(-1, 1.73)}
\newcommand{\vbone}[1]{
	\begin{scope}
		\draw[clip] {#1} \dX\dX\dX\dY\dx\dx\dZ -- cycle;
		\draw[line width = 3pt, color = blue, pattern={Lines[line width = 0.5pt, angle=90,distance=1.5]},pattern color=blue] {#1} \dX\dX\dX\dY\dx\dx\dZ -- cycle;
		\draw[line width = 3pt, color = blue, fill = blue, fill opacity = 0.33] {#1} \dX\dX\dX\dY\dx\dx\dZ -- cycle;
	\end{scope}
}
\newcommand{\snake}[2]{
	\begin{scope}
		\draw[clip] {#1} {#2} -- cycle;
		\draw[line width = 3pt, color = purple, pattern={
			Hatch[line width = 0.5pt, angle=45, distance=2.5, xshift=.1pt]}, pattern color=purple] {#1} {#2} -- cycle;
		\draw[line width = 3pt, color = purple, fill = purple, fill opacity = 0.33] {#1} {#2} -- cycle;
	\end{scope}
}
\newcommand{\stone}[2]{
	\begin{scope}
		\draw[clip] {#1} {#2} -- cycle;
		\draw[line width = 3pt, color = green, pattern={
			Dots[radius=0.66, angle=45, distance=2.5, xshift=.1pt]}, pattern color=green] {#1} {#2} -- cycle;
		\draw[line width = 3pt, color = green, fill = green, fill opacity = 0.33] {#1} {#2} -- cycle;
	\end{scope}
}
\newcommand{\negtile}[2]{
	\begin{scope}
		\draw[clip] {#1} {#2} -- cycle;
		\draw[line width = 4pt, color = lime, pattern = {Lines[line width = 0.7mm, distance=2mm, angle=0]}, pattern color = lime] {#1} {#2} -- cycle;
	\end{scope}
}
\newcommand{\lr}[1]{\left\langle{#1}\right\rangle}
\newcommand{\tile}[1]{\downharpoonleft\hspace{-1.2ex}[{#1}]\hspace{-1.2ex}\upharpoonright}
\title{Stones, Bones, and Snakes: Tilability of the hexagonal grid via the double dimer model}
\author{Leigh Foster\thanks{University of Waterloo, \href{mailto:mathematicleigh@gmail.com}{mathematicleigh@gmail.com}.}}
\definecolor{yellow}{RGB}{253, 231, 37}
\definecolor{lime}{RGB}{94, 201, 98}
\definecolor{green}{RGB}{33, 145, 140}
\definecolor{blue}{RGB}{59, 82, 139}
\definecolor{purple}{RGB}{68, 10, 84}
\newtheorem{theorem}{Theorem}
\newtheorem{conjecture}[theorem]{Conjecture}
\newtheorem{corollary}[theorem]{Corollary}
\newtheorem{lemma}[theorem]{Lemma}
\newtheorem{remark}[theorem]{Remark}
\theoremstyle{definition}
\newtheorem{example}[theorem]{Example}
\newtheorem{definition}[theorem]{Definition}
\DeclareMathOperator{\SL}{SL}
\begin{document}
	
	\maketitle
	
	\begin{abstract}
		The question of whether a given region can be successfully filled by a finite set of tiles has been commonly studied, and there are many available arguments for whether a given finite region can be tiled. We can show that there is no domino tiling of the mutilated chessboard via a coloring argument, and a slightly more subtle argument for other two-colored square-grid regions using a height function of Thurston. 
		In this paper, we examine finite regions of the hexagonal grid and a set of tiles known as the \textit{stone}, \textit{bone}, and \textit{snake}. Using matrices in $\text{SL}_2(\C)$, we exhibit a new necessary criterion for a region to have a signed tiling by these tiles. This originally arose in a study of the double dimer model.  
	\end{abstract}

\section{Introduction}

In 1990, Conway and Lagarias \cite{conway-lagarias} introduced conditions for the tilability of a finite region in a regular hexagonal grid using combinatorial group theory. Based on this work, Thurston \cite{thurston} introduced new necessary conditions on the tilability of such a region using height functions. Both papers used a given set of tiles (later called bones and stones), and in this paper we introduce one additional tile, called the snake. We give a new set of necessary conditions on the tilability of a finite region of the regular hexagonal grid, using technology from the single and double dimer models (as introduced in \cite{kenyon:conformal}, \cite{young:squish}, and \cite{foster-young}). 

A portion of these results have previously appeared in the PhD dissertation \cite{foster:thesis}.  We would like to thank Benjamin Young and Hanna Mularczyk for helpful conversations. 

\subsection{Tilability}

First, we introduce the concept of tilability.  Throughout the paper, we will use $R$ to mean a region enclosed by a loop in the hexagonal grid. $R$ can also be thought of as a simply-connected collection of faces of the grid. For a given graph edge $e$, we use $\partial R(e)$ to mean the path around $R$ when traveling in a counterclockwise direction starting at $e$. See the left of Figure \ref{crescenttiling} for an example of $R$ and $\partial R$. 



\begin{definition}  A \emph{signed tiling} of a region $R$ is a collection of tiles, each with a weight of $+1$ or $-1$, covering $R$ in such a way that the total contribution at each hexagon inside $R$ is 1, and the total contribution of each hexagon outside $R$ is zero. 
	\label{signed tiling}
\end{definition}

\begin{definition}
	A \emph{(standard) tiling} of a region $R$ is a signed tiling where every tile has weight 1. So a standard tiling has no overlapping tiles, and no tiles going outside of $R$. 
\end{definition}

\begin{definition}
	\label{def:tiles}
	A \emph{bone} is the union of three collinear adjacent hexagons in $H$. (These were originally named \textit{tribones} in \cite{conway-lagarias}, and were given the name \textit{bones} in \cite{jesse-propp}.)  A \emph{stone} is the union of three hexagons in $H$ which all share a common vertex. (These we called $T_2$ in \cite{thurston}, and were given the name \textit{stone} in \cite{jesse-propp}.)  A \emph{snake} is the union of four hexagons in an ``S'' shape (named in \cite{foster-young}). See Figure \ref{fig:stone-bone-snake}.
\end{definition}

\begin{figure}[h!]
	\centering
	\begin{tikzpicture}[scale = 0.233, rotate = 0]
		\clip (-16.5, -4) rectangle (17.5, 13);
\draw[dotted] (-16.5, -4) rectangle (17.5, 13);
\draw (-15, -3*1.73) \dX\dX\dX\dX\dX\dX\dX\dX\dX\dX\dX;
\draw (-12, -4*1.73) \dX\dX\dX\dX\dX\dX\dX\dX\dX\dX\dX;
\draw (-9, -3*1.73) \dX\dX\dX\dX\dX\dX\dX\dX\dX\dX\dX;
\draw (-6, -4*1.73) \dX\dX\dX\dX\dX\dX\dX\dX\dX\dX\dX;
\draw (-3, -3*1.73) \dX\dX\dX\dX\dX\dX\dX\dX\dX\dX\dX;
\draw (0, -4*1.73) \dX\dX\dX\dX\dX\dX\dX\dX\dX\dX\dX;
\draw (3, -3*1.73) \dX\dX\dX\dX\dX\dX\dX\dX\dX\dX\dX;
\draw (6, -4*1.73) \dX\dX\dX\dX\dX\dX\dX\dX\dX\dX\dX;
\draw (9, -3*1.73) \dX\dX\dX\dX\dX\dX\dX\dX\dX\dX\dX;
\draw (12, -4*1.73) \dX\dX\dX\dX\dX\dX\dX\dX\dX\dX\dX;
\draw (15, -3*1.73) \dX\dX\dX\dX\dX\dX\dX\dX\dX\dX\dX;

\draw (18, 0*1.73) \dY\dY\dY\dY\dY\dY\dY\dY\dY\dY\dY;
\draw (18, 2*1.73) \dY\dY\dY\dY\dY\dY\dY\dY\dY\dY\dY;
\draw (18, 4*1.73) \dY\dY\dY\dY\dY\dY\dY\dY\dY\dY\dY;
\draw (18, 6*1.73) \dY\dY\dY\dY\dY\dY\dY\dY\dY\dY\dY;
\draw (18, 8*1.73) \dY\dY\dY\dY\dY\dY\dY\dY\dY\dY\dY\dY;
\draw (18, 10*1.73) \dY\dY\dY\dY\dY\dY\dY\dY\dY\dY\dY\dY;
\draw (18, 12*1.73) \dY\dY\dY\dY\dY\dY\dY\dY\dY\dY\dY\dY;
\draw (6, 12*1.73) \dY\dY\dY\dY\dY\dY\dY\dY\dY\dY\dY;
\draw (12, 12*1.73) \dY\dY\dY\dY\dY\dY\dY\dY\dY\dY\dY;
\draw (12, 16*1.73) \dY\dY\dY\dY\dY\dY\dY\dY\dY\dY\dY;

\stone{(-9, 1.73)}{\dX\dX\dY\dY\dZ\dZ}
\snake{(15, 1*1.73)}{ \dX\dz\dY\dz\dY\dZ\dZ\dy\dZ}
\vbone{(0,0)}
	\end{tikzpicture}
	\caption[Stone, bone, and snake]{One possible orientation of the stone (left), bone (center), and snake (right) tiles}
	\label{fig:stone-bone-snake}
\end{figure}
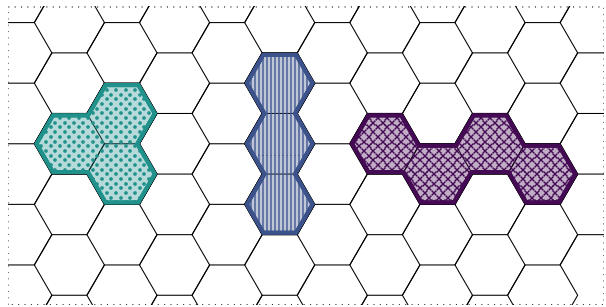

\begin{example}
	
	See Figure \ref{crescenttiling} for an example of a signed tiling using exactly one bone, one stone, and one snake. We start by placing the teal (dotted) stone at the top of the image, then the purple (diamond grid) snake at the bottom. Note that each of these tiles leaves the desired blue dashed region so it cannot be a regular tiling. Finally, place a lime bone (lined) with negative weight, resulting in a weight of 1 inside the dashed region, and a weight of 0 everywhere else. 
	\begin{figure}[h!]
	\centering
	\resizebox{!}{2in}{
		\begin{tikzpicture}[scale = 0.3, rotate = 120]
			\draw[clip] (4, 4) circle (8);
\draw[line width = 3pt, color = blue, fill = blue!10!white] (9, 1.73) \dX\dX\dz\dY\dY\dZ\dy\dx\dZ;
\draw (-6, -4*1.73) \dX\dX\dX\dX\dX\dX\dX\dX;
\draw (-3, -3*1.73) \dX\dX\dX\dX\dX\dX\dX\dX;
\draw (0, -4*1.73) \dX\dX\dX\dX\dX\dX\dX\dX;
\draw (3, -3*1.73) \dX\dX\dX\dX\dX\dX\dX\dX;
\draw (6, -4*1.73) \dX\dX\dX\dX\dX\dX\dX\dX;
\draw (9, -3*1.73) \dX\dX\dX\dX\dX\dX\dX\dX;
\draw (12, 0*1.73) \dY\dY\dY\dY\dY\dY\dY\dY;
\draw (12, 2*1.73) \dY\dY\dY\dY\dY\dY\dY\dY;
\draw (12, 4*1.73) \dY\dY\dY\dY\dY\dY\dY\dY;
\draw (12, 6*1.73) \dY\dY\dY\dY\dY\dY\dY\dY;
\draw (12, 8*1.73) \dY\dY\dY\dY\dY\dY\dY\dY;
\draw (12, 10*1.73) \dY\dY\dY\dY\dY\dY\dY\dY;
\draw[line width = 3pt, dotted, color = blue] (9, 1.73) \dX\dX\dz\dY\dY\dZ\dy\dx\dZ;
		\end{tikzpicture}
		\begin{tikzpicture}[scale = 0.3, rotate = 120]
			\draw[clip] (4, 4) circle (8);
\draw (-6, -4*1.73) \dX\dX\dX\dX\dX\dX\dX\dX;
\draw (-3, -3*1.73) \dX\dX\dX\dX\dX\dX\dX\dX;
\draw (0, -4*1.73) \dX\dX\dX\dX\dX\dX\dX\dX;
\draw (3, -3*1.73) \dX\dX\dX\dX\dX\dX\dX\dX;
\draw (6, -4*1.73) \dX\dX\dX\dX\dX\dX\dX\dX;
\draw (9, -3*1.73) \dX\dX\dX\dX\dX\dX\dX\dX;
\draw (12, 0*1.73) \dY\dY\dY\dY\dY\dY\dY\dY;
\draw (12, 2*1.73) \dY\dY\dY\dY\dY\dY\dY\dY;
\draw (12, 4*1.73) \dY\dY\dY\dY\dY\dY\dY\dY;
\draw (12, 6*1.73) \dY\dY\dY\dY\dY\dY\dY\dY;
\draw (12, 8*1.73) \dY\dY\dY\dY\dY\dY\dY\dY;
\draw (12, 10*1.73) \dY\dY\dY\dY\dY\dY\dY\dY;
\stone{(9, 1.73)}{\dX\dX\dY\dY\dZ\dZ}
\snake{(0, 0)}{ \dy\dX\dy\dX\dY\dY\dx\dY\dZ}
\negtile{(0,0)}{\dy\dy\dX\dY\dY\dY\dZ}
\draw[line width = 3pt, color = blue] (9, 1.73) \dX\dX\dz\dY\dY\dZ\dy\dx\dZ;
	\end{tikzpicture}}
	\caption[Crescent-moon tiling]{A signed tiling of a crescent-moon region. The purple (diamond grid) snake and teal (dotted) stone are positive, and the lined green bone is negative.}
	\label{crescenttiling}
\end{figure}
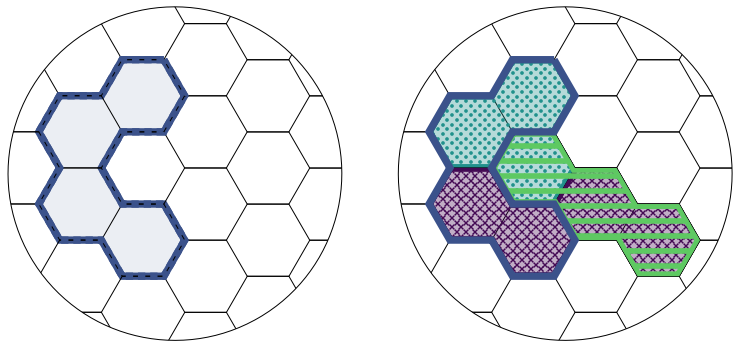
\end{example}

Finally, to give an explicit description of $\partial R(e)$, we assign a label to each graph edge. We place $\alpha$ on all northeast oriented edges, $\beta$ on all northwest oriented edges, and $\gamma$ on all horizontal (westward) oriented edges, as in Figure \ref{fig:matrix-weights}. This is similar to the combinatorial group theoretic description given in \cite{conway-lagarias}. We give a full description of $\partial R$ for each tile in Figure \ref{alltiles}. 

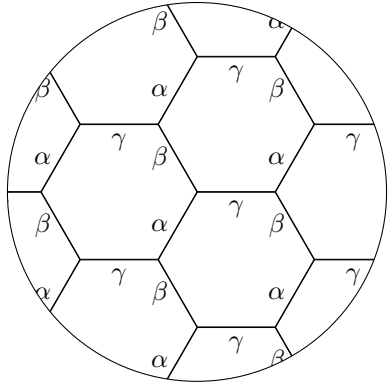
\begin{figure}
	\centering
	\begin{tikzpicture}[scale = 1.8]
		\def\hexagon{+(0:0.575cm) -- +(60:0.575cm) -- +(120:0.575cm) -- +(180:0.575cm) -- +(240:0.575cm) -- +(300:0.575cm) -- cycle}
		\def\hexagonweights{+(0:0.575cm) -- node [anchor = east]{$\beta$} +(60:0.575cm) -- node [anchor = north]{} +(120:0.575cm) -- node [anchor = west]{} +(180:0.575cm) -- node [anchor = west]{} +(240:0.575cm) -- node [anchor = north]{$\gamma$} +(300:0.575cm) -- node [anchor = east]{$\alpha$}cycle;}
		\def \squish {+(0: 1cm) -- 
			++(60: 1cm) -- +(60: .155cm) -- +(240: 0cm) -- 
			++(180:1cm) -- +(120: .155cm) -- +(240: 0cm) -- 
			++(240: 1cm) -- +(180: .155cm) -- + (240: 0cm) -- 
			++(300: 1cm) -- +(240: .155cm) -- +(240: 0cm) -- 
			++(360: 1cm) -- +(300: .155cm) -- +(240: 0cm) -- 
			++(420: 1cm) -- +(360: .155cm) -- +(240: 0cm)}
		
		\clip[draw] (0.575,1) circle (1.4cm);

		\foreach \x in {-2,...,2}
		\foreach \y\ in {-2,...,2}
		\draw (1.73*\x,\y) \hexagonweights;

		\foreach \x in {-2,...,2}
		\foreach \y\ in {-2,...,2}
		\draw (1.73*\x + 1.73*0.5,\y + 0.5) \hexagonweights;
	\end{tikzpicture}
	\caption{The assignment of $\alpha$, $\beta$, and $\gamma$ to every edge of the hexagonal grid}
	\label{fig:matrix-weights}
\end{figure}

\begin{figure}[h!]\footnotesize
	$$\begin{array}{c c c}
		\begin{tikzpicture}[scale = 0.15]
			\def \hex {+(0, 0) -- +(2, 0) -- +(3, 1*1.73) -- +(2, 2*1.73) -- +(0, 2*1.73) -- +(-1, 1*1.73) -- +(0, 0)};
			\draw[very thin] (0, 0)     \hex;
			\draw[very thin] (-3, 1*1.73) \hex;
			\draw[very thin] (-6, 2*1.73) \hex;
			\draw[color = blue, line width = 2pt] (0, 0) -- ++(2, 0) -- ++(1, 1.73) -- ++(-1, 1.73) -- ++(-2, 0) -- ++(-1, 1.73) -- ++(-2, 0) -- ++(-1, 1.73) -- ++(-2, 0) -- ++(-1, -1.73) -- ++(1, -1.73) -- ++(2, 0) -- ++(1, -1.73) -- ++(2, 0) -- ++(1, -1.73) -- cycle;
			\node at (2, 0) {$\bullet$};
		\end{tikzpicture}
		&
		\begin{tikzpicture}[scale = 0.15]
			\def \hex {+(0, 0) -- +(2, 0) -- +(3, 1*1.73) -- +(2, 2*1.73) -- +(0, 2*1.73) -- +(-1, 1*1.73) -- +(0, 0)};
			\draw[very thin] (0, 0)     \hex;
			\draw[very thin] (0, 2*1.73) \hex;
			\draw[very thin] (0, 4*1.73) \hex;
			\draw[color = blue, line width = 2pt] (0, 0) -- ++(2, 0) -- ++(1, 1.73) -- ++(-1, 1.73) -- ++(1, 1.73) -- ++(-1, 1.73) -- ++(1, 1.73) -- ++(-1, 1.73) -- ++(-2, 0) -- ++(-1, -1.73) -- ++(1, -1.73) -- ++(-1, -1.73) -- ++(1, -1.73) -- ++(-1, -1.73) -- cycle;
			\node at (2, 0) {$\bullet$};
		\end{tikzpicture}
		&	
		\begin{tikzpicture}[scale = 0.15]
			\def \hex {+(0, 0) -- +(2, 0) -- +(3, 1*1.73) -- +(2, 2*1.73) -- +(0, 2*1.73) -- +(-1, 1*1.73) -- +(0, 0)};
			\draw[very thin] (-2, 0)     \hex;
			\draw[very thin] (1, 1*1.73) \hex;
			\draw[very thin] (4, 2*1.73) \hex;
			\draw[color = blue, line width = 2pt] (0, 0) -- ++(-2, 0) -- ++(-1, 1.73) -- ++(1, 1.73) -- ++(2, 0) -- ++(1, 1.73) -- ++(2, 0) -- ++(1, 1.73) -- ++(2, 0) -- ++(1, -1.73) -- ++(-1, -1.73) -- ++(-2, 0) -- ++(-1, -1.73) -- ++(-2, 0) -- ++(-1, -1.73) -- cycle;
			\node at (0, 0) {$\bullet$};
		\end{tikzpicture}
		\\
		(\gamma^{-1}\beta^{-1})^3\alpha^{-1}(\gamma\beta)^3 \alpha 
		&
		\gamma^{-1}(\beta^{-1}\alpha^{-1})^3\gamma(\beta\alpha)^3 
		& 
		\beta^{-1}(\alpha^{-1}\gamma)^3 \beta (\alpha\gamma^{-1})^3
	\end{array}$$
	$$\begin{array}{c c}
		\begin{tikzpicture}[scale = 0.15]
			\def \hex {+(0, 0) -- +(2, 0) -- +(3, 1*1.73) -- +(2, 2*1.73) -- +(0, 2*1.73) -- +(-1, 1*1.73) -- +(0, 0)};
			\draw[very thin] (0, 0)     \hex;
			\draw[very thin] (3, -1*1.73) \hex;
			\draw[very thin] (3, 1*1.73) \hex;
			\draw[color = blue, line width = 2pt] (0, 0) -- ++(2, 0) -- ++(1, -1.73) -- ++(2, 0) -- ++(1, 1.73) -- ++(-1, 1.73) -- ++(1, 1.73) -- ++(-1, 1.73) -- ++(-2, 0) -- ++(-1, -1.73) -- ++(-2, 0) -- ++(-1, -1.73) -- ++(1, -1.73) -- cycle;
			\node at (5, -1.73) {$\bullet$};
		\end{tikzpicture}
		&
		\begin{tikzpicture}[scale = 0.15]
			\def \hex {+(0, 0) -- +(2, 0) -- +(3, 1*1.73) -- +(2, 2*1.73) -- +(0, 2*1.73) -- +(-1, 1*1.73) -- +(0, 0)};
			\draw[very thin] (-2, 0)     \hex;
			\draw[very thin] (-5, -1*1.73) \hex;
			\draw[very thin] (-5, 1*1.73) \hex;
			\draw[color = blue, line width = 2pt] (0, 0) -- ++(-2, 0) -- ++(-1, -1.73) -- ++(-2, 0) -- ++(-1, 1.73) -- ++(1, 1.73) -- ++(-1, 1.73) -- ++(1, 1.73) -- ++(2, 0) -- ++(1, -1.73) -- ++(2, 0) -- ++(1, -1.73) -- ++(-1, -1.73) -- cycle;
			\node at (-3, -1.73) {$\bullet$};
		\end{tikzpicture}
		\\
		(\gamma^{-1}\beta^{-1})^2(\alpha^{-1}\gamma)^2(\beta\alpha)^2 
		&
		(\beta^{-1}\alpha^{-1})^2(\gamma\beta)^2(\alpha\gamma^{-1})^2
	\end{array}$$
	$$\begin{array}{c c c}
		\begin{tikzpicture}[scale = 0.15]
			\def \hex {+(0, 0) -- +(2, 0) -- +(3, 1*1.73) -- +(2, 2*1.73) -- +(0, 2*1.73) -- +(-1, 1*1.73) -- +(0, 0)};
			\draw[very thin] (0, -2*1.73) \hex;
			\draw[very thin] (3, -3*1.73) \hex;
			\draw[very thin] (6, -2*1.73) \hex;
			\draw[very thin] (9, -3*1.73) \hex;
			\draw[color = blue, line width = 2pt] (0, 0) -- ++(2, 0) -- ++(1, -1.73) -- ++(2, 0) -- ++(1, 1.73) -- ++(2, 0) -- ++(1, -1.73) -- ++(2, 0) -- ++(1, -1.73)  -- ++(-1, -1.73) -- ++(-2, 0) -- ++(-1, 1.73) -- ++(-2, 0) -- ++(-1, -1.73) -- ++(-2, 0) -- ++(-1, 1.73) -- ++(-2, 0) -- ++(-1, 1.73) -- cycle;
			\node at (11, -3*1.73) {$\bullet$};
		\end{tikzpicture}
		&
		\begin{tikzpicture}[scale = 0.15]
			\def \hex {+(0, 0) -- +(2, 0) -- +(3, 1*1.73) -- +(2, 2*1.73) -- +(0, 2*1.73) -- +(-1, 1*1.73) -- +(0, 0)};
			\draw[very thin] (-2, 0) \hex;
			\draw[very thin] (-2, 2*1.73) \hex;
			\draw[very thin] (-5, 3*1.73) \hex;
			\draw[very thin] (-5, 5*1.73) \hex;
			\draw[color = blue, line width = 2pt] (0, 0) -- ++(-2, 0) -- ++(-1, 1.73) -- ++(1, 1.73) -- ++(-1, 1.73) -- ++(-2, 0) -- ++(-1, 1.73) -- ++(1, 1.73) -- ++ (-1, 1.73) -- ++(1, 1.73) -- ++(2, 0) -- ++(1, -1.73) -- ++(-1, -1.73) -- ++(1, -1.73) -- ++(2, 0) -- ++(1, -1.73) -- ++(-1, -1.73) -- ++(1, -1.73) -- cycle;
			\node at (0, 0) {$\bullet$};
		\end{tikzpicture}
		&
		\begin{tikzpicture}[scale = 0.15]
			\def \hex {+(0, 0) -- +(2, 0) -- +(3, 1*1.73) -- +(2, 2*1.73) -- +(0, 2*1.73) -- +(-1, 1*1.73) -- +(0, 0)};
			\draw[very thin] (0, 0) \hex;
			\draw[very thin] (3, 1*1.73) \hex;
			\draw[very thin] (6, 0*1.73) \hex;
			\draw[very thin] (9, 1*1.73) \hex;
			\draw[color = blue, line width = 2pt] (0, 0) -- ++(2, 0) -- ++(1, 1.73) -- ++(2, 0) -- ++(1, -1.73) -- ++(2, 0) -- ++(1, 1.73) -- ++(2, 0) -- ++(1, 1.73)  -- ++(-1, 1.73) -- ++(-2, 0) -- ++(-1, -1.73) -- ++(-2, 0) -- ++(-1, 1.73) -- ++(-2, 0) -- ++(-1, -1.73) -- ++(-2, 0) -- ++(-1, -1.73) -- cycle;
			\node at (11, 1.73) {$\bullet$};
		\end{tikzpicture}
		\\
		(\alpha\gamma^{-1}\beta^{-1}\gamma^{-1})^2\beta^{-1}(\alpha^{-1}\gamma\beta\gamma)^2\beta\ \  
		&
		(\gamma^{-1}\beta^{-1}\alpha^{-1}\beta^{-1})^2\alpha^{-1}(\gamma\beta\alpha\beta)^2\alpha\ 
		&
		(\gamma^{-1}\alpha\gamma^{-1}\beta^{-1})^2\alpha^{-1}(\gamma\alpha^{-1}\gamma\beta)^2\alpha 
		\\
		\begin{tikzpicture}[scale = 0.15]
			\def \hex {+(0, 0) -- +(2, 0) -- +(3, 1*1.73) -- +(2, 2*1.73) -- +(0, 2*1.73) -- +(-1, 1*1.73) -- +(0, 0)};
			\draw[very thin] (0, 0) \hex;
			\draw[very thin] (-3, 1*1.73) \hex;
			\draw[very thin] (-3, 3*1.73) \hex;
			\draw[very thin] (-6, 4*1.73) \hex;
			\draw[color = blue, line width = 2pt] (0, 0) -- ++(2, 0) -- ++(1, 1.73) -- ++(-1, 1.73) -- ++(-2, 0) -- ++(-1, 1.73) -- ++(1, 1.73) -- ++(-1, 1.73) -- ++(-2, 0) -- ++(-1, 1.73) -- ++(-2, 0) -- ++(-1, -1.73) -- ++(1, -1.73) -- ++(2, 0) -- ++(1, -1.73) -- ++(-1, -1.73) -- ++(1, -1.73) -- ++(2, 0) -- cycle;
			\node at (2, 0) {$\bullet$};
		\end{tikzpicture}
		&
		\begin{tikzpicture}[scale = 0.15]
			\def \hex {+(0, 0) -- +(2, 0) -- +(3, 1*1.73) -- +(2, 2*1.73) -- +(0, 2*1.73) -- +(-1, 1*1.73) -- +(0, 0)};
		\draw[very thin] (0, 0) \hex;
		\draw[very thin] (0, 2*1.73) \hex;
		\draw[very thin] (3, 3*1.73) \hex;
		\draw[very thin] (3, 5*1.73) \hex;
		\draw[color = blue, line width = 2pt] (0, 0) -- ++(2, 0) -- ++(1, 1.73) -- ++(-1, 1.73) -- ++(1, 1.73) -- ++(2, 0) -- ++(1, 1.73) -- ++(-1, 1.73) -- ++ (1, 1.73) -- ++(-1, 1.73) -- ++(-2, 0) -- ++(-1, -1.73) -- ++(1, -1.73) -- ++(-1, -1.73) -- ++(-2, 0) -- ++(-1, -1.73) -- ++(1, -1.73) -- ++(-1, -1.73) -- cycle;
		\node at (2, 0) {$\bullet$};
		\end{tikzpicture}
		&
		\begin{tikzpicture}[scale = 0.15]
			\def \hex {+(0, 0) -- +(2, 0) -- +(3, 1*1.73) -- +(2, 2*1.73) -- +(0, 2*1.73) -- +(-1, 1*1.73) -- +(0, 0)};
			\draw[very thin] (-2, 0) \hex;
			\draw[very thin] (1, 1*1.73) \hex;
			\draw[very thin] (1, 3*1.73) \hex;
			\draw[very thin] (4, 4*1.73) \hex;
			\draw[color = blue, line width = 2pt] (0, 0) -- ++(-2, 0) -- ++(-1, 1.73) -- ++(1, 1.73) -- ++(2, 0) -- ++(1, 1.73) -- ++(-1, 1.73) -- ++(1, 1.73) -- ++(2, 0) -- ++(1, 1.73) -- ++(2, 0) -- ++(1, -1.73) -- ++(-1, -1.73) -- ++(-2, 0) -- ++(-1, -1.73) -- ++(1, -1.73) -- ++(-1, -1.73) -- ++(-2, 0) -- cycle;
			\node at (3, 1.73) {$\bullet$};
		\end{tikzpicture}
		\\
		\gamma^{-1}(\beta^{-1}\gamma^{-1}\beta^{-1}\alpha^{-1})^2\gamma(\beta\gamma\beta\alpha)^2 
		&
		\beta^{-1}(\alpha^{-1}\beta^{-1}\alpha^{-1}\gamma)^2\beta(\alpha\beta\alpha\gamma^{-1})^2 
		& 
		\gamma^{-1}(\beta^{-1}\alpha^{-1}\gamma\alpha^{-1})^2\gamma(\beta\alpha\gamma^{-1}\alpha)^2
		
	\end{array}$$\normalsize
	\caption[Stone, bone, and snake tiles]{All possible orientations of the stone, bone, and snake tiles, as well as one representative product describing $[\partial R]$. The top row is all three orientations of bones, the second row both orientations of stones, and the bottom two rows all six orientations of the snake tile.}
	\label{alltiles}
\end{figure}
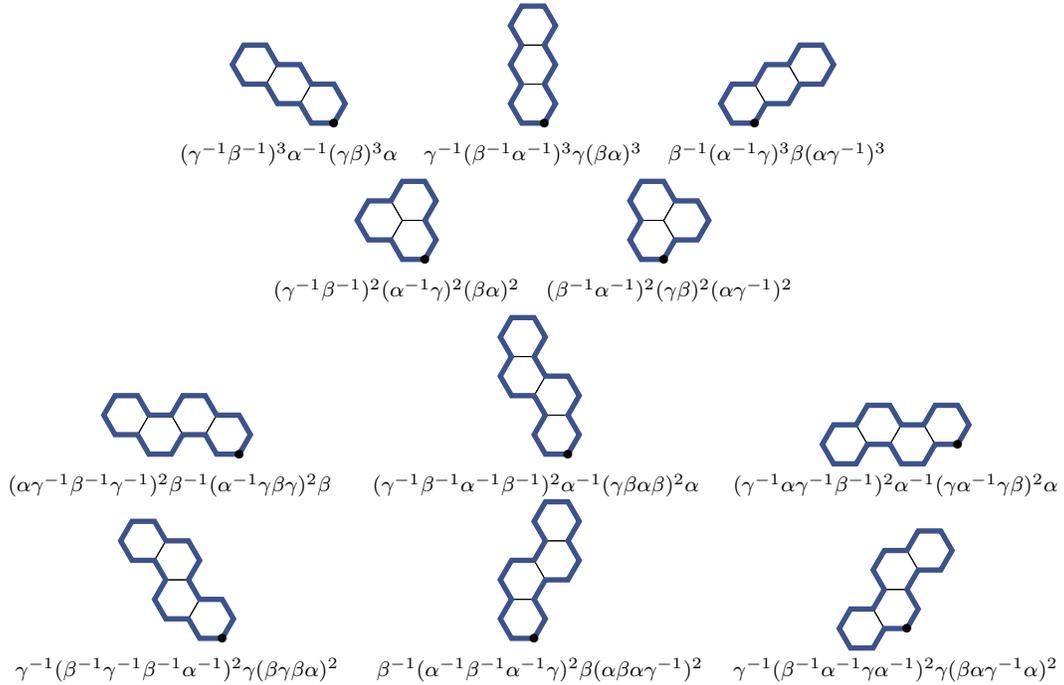

Although we can always specify a path beginning at a particular edge, it is often more convenient to talk about a path in more generality. Doing so will lead us to be able to talk about tilings from a group-theoretic point of view. The following three definitions are due to Conway and Lagarias, in \cite{conway-lagarias}. 

 \begin{definition}
	The notation $[\partial{R}]$ is defined to be to be the \emph{combinatorial boundary} - that is, all of the conjugacy classes in the free group that contains all oriented boundaries $\partial R(e)$ of edges in the tile, i.e. $$[\partial R] = \{W\partial R(e)W^{-1} \vert W \in F\}$$ where $F$ is the free group on the generators given by the edges in the grid. 
	\label{conwaydef}
\end{definition}

\begin{definition}
	Let $F$ be the free group on generators given by the set of edge labels of the graph. Then for a set of tiles $\Sigma$, the \textit{tile group} $T(\sigma)$ is the smallest normal subgroup of $F$ containing the combinatorial boundaries of all tiles in $\Sigma$. 
	
	$$T(\Sigma) = N(\lr{\partial R_i(e_i) \mid 1 \leq i \leq m}) = \lr{W\partial R_i(e_i)W^{-1} \mid W \in F, 1 \leq i \leq m}$$	
\end{definition}

\begin{definition}
	Let $C$, called the cycle group, be defined as the commutator subgroup of $F$ (where $F$ is defined as above). Or, $C = [F:F]$. Then we define the \textit{tile homotopy group} to be $$h(\Sigma) = C/T(\Sigma).$$
	\label{homotopy}
\end{definition}

Note that $C$ consists of all (allowable) paths around closed loops in the grid,  and $T(\Sigma)$ represents the paths that can be deformed
 to the empty path by picking up or laying down tiles. 

\subsection{The $\SL_2{\C}$ connection}

Our main results also use technology from the single and double dimer models, so we aim to give a brief overview of the necessary material. We will first discuss the dimer model in some generality, and then see how it applies to the hexagonal grid and tilings. 

\begin{definition}
	A \textit{single dimer cover}, or \textit{perfect matching}, is a subset of graph edges such that every vertex has degree one. 
\end{definition}

\begin{definition}
	The \emph{weight} of a graph is an assignment $\nu: E \to \mathbb{R}_{\geq 0}$ of real numbers onto each edge, $e$, of the graph, where $E$ is the set of edges in a graph $G$. The weight of a dimer configuration $D$ is $$w(D) = \prod_{e \in D} w(e).$$ 
\end{definition}

\begin{definition}
	A \textit{double dimer configuration} is a subset of graph edges such that every vertex has degree two. A double dimer configuration consists entirely of closed loops and doubled edges. 
\end{definition}

\begin{definition}
	This definition is due to \cite{kenyon:conformal}. Let $G = (V,E)$ be a bipartite graph with a \emph{scalar weight} $w:E \to \mathbb C$ as well as an \emph{$\text{SL}_2$ connection}: a map $\Gamma:E \rightarrow SL_2(\mathbb C)$. Then the contribution of a double dimer configuration $DD$ is defined to be 
	\[
	\left(\prod_{e \in DD} w(e) \right)
	\times\!\!\!\!\!\! \prod_{\text{closed loops $L \in DD$}}
	\text{Tr}\left(\prod_{e \in L} \Gamma(e)\right).\]
\end{definition}

One thing that is important to note is that the first product in the above definition is over all edges of the double dimer configuration, including doubled edges. This means that doubled edges are in fact counted twice. The second product, over closed loops, does not include the doubled edges, only nontrivial cycles. Finally, note that $Tr(w) = Tr(w^{-1})$, so the formula still holds even if we traverse a loop backwards. We will generally refer to the scalar portion as the weight, and the trace of the matrix product to be the contribution. 

\begin{figure}[h!]
	\centering
	\resizebox{5cm}{!}{
	\begin{tikzpicture}[scale = 0.15]
		 

\def \hex {+(0, 0) -- +(2, 0) -- +(3, 1*1.73) -- +(2, 2*1.73) -- +(0, 2*1.73) -- +(-1, 1*1.73) -- +(0, 0)};
\def\up#1{
	\begin{scope}[shift={#1}]
		\draw [line width = 2.5pt, lime, line cap=round] (1, -1*1.73) -- (2, 0*1.73);
	\end{scope}
}
\def\down#1{
	\begin{scope}[shift={#1}]
		\draw [line width = 2.5pt, lime, line cap=round] (-1, -1.73) -- (-2, 0);
	\end{scope}
}
\def\flat#1{
	\begin{scope}[shift={#1}]
		\draw [line width = 2.5pt, lime, line cap=round] (-1, 1.73) -- (1, 1.73);
	\end{scope}
}

\flat{(-3,1.73)};
\flat{(3, 5*1.73)};
\flat{(0, 4*1.73)};
\flat{(3, -5*1.73)};
\flat{(6, 4*1.73)};
\flat{(0, -2*1.73)};
\flat{(3, -1*1.73)};
\flat{(6, -2*1.73)};
\flat{(9, -1*1.73)};

\down{(-3,1*1.73)};
\down{(-3, -1*1.73)};
\down{(0, 4*1.73)};
\down{(0, -2*1.73)};
\down{(6, 4*1.73)};
\down{(6, 2*1.73)};
\down{(6, -2*1.73)};
\down{(12, 4*1.73)}; 
\down{(12, 2*1.73)}; 

\up{(-3, 1*1.73)};
\up{(0, -2*1.73)};
\up{(6, 4*1.73)};
\up{(6, 2*1.73)};
\up{(0, 2*1.73)};
\up{(0, 4*1.73)};
\up{(6, -2*1.73)};
\up{(9, -1*1.73)};
\up{(-6, 4*1.73)};

\draw[very thin, black] (2, 4*1.73) \hex;
\draw[very thin, black] (2, 2*1.73) \hex;
\draw[very thin, black] (2, 0*1.73) \hex;
\draw[very thin, black] (2,-2*1.73) \hex;
\draw[very thin, black] (2,-4*1.73) \hex;
\draw[very thin, black] (-4, 0*1.73) \hex;
\draw[very thin, black] (-4,-2*1.73) \hex;
\draw[very thin, black] (-4, 2*1.73) \hex;
\draw[very thin, black] (-1, -3*1.73) \hex;
\draw[very thin, black] (-1, 3*1.73) \hex;
\draw[very thin, black] (-1, 1*1.73) \hex;
\draw[very thin, black] (-1,-1*1.73) \hex;
\draw[very thin, black] (5, -3*1.73) \hex;
\draw[very thin, black] (5, 3*1.73) \hex;
\draw[very thin, black] (5, 1*1.73) \hex;
\draw[very thin, black] (5,-1*1.73) \hex;
\draw[very thin, black] (8,-2*1.73) \hex;
\draw[very thin, black] (8, 0*1.73) \hex;
\draw[very thin, black] (8, 2*1.73) \hex;
	\end{tikzpicture} }
\qquad
\resizebox{5cm}{!}{
		\begin{tikzpicture}[scale = 0.15]
			\input{tikz/duplomatchingoverlay.txt}
		\end{tikzpicture}
	}
	\caption{A $3 \times 3 \times 3$ hexagonal grid showing a single dimer cover at left, and a double dimer configuration on the right}
	\label{single-double-dimers}
\end{figure}

We can now define $\alpha$, $\beta$, and $\gamma$, as used above, to be the following matrices in $\SL_2(\C)$. 

\begin{definition}
	Define the matrices $\alpha$, $\beta$, and $\gamma$ as follows, with $\omega$ a primitive 3rd root of unity. 
	
	$$\displaystyle\alpha := 
\begin{bmatrix}
	\omega^7 & 0 \\
	0 & \omega^5
\end{bmatrix}
\qquad 
\beta := \displaystyle
\begin{bmatrix} 
	\omega^7 & \omega^3 \\
	0 & \omega^5
\end{bmatrix}
\qquad
\gamma := \displaystyle
\begin{bmatrix} 
	\omega^5 & 0 \\
	\omega^3 & \omega^7
\end{bmatrix}.
$$
\end{definition}

A more general version of these matrices was used in \cite{foster-young} to prove a partition function for a particular $\Z_2 \times \Z_2$ partition function.

Since each tile is a finite region of the hexagonal grid, we can view $\partial R$ as its corresponding double dimer cover. Then each tile has associated with it a matrix product. Thus we can apply the language of the dimer model to study tilings.

\section{Tilability results}

To determine whether a simply-connected region $R$ is tilable, we first discuss the times where we may remove a tile from $R$. Our first requirement is that doing so may not disconnect $R$ (we want to avoid breaking the region up). So, for example, the bone shown in Figure \ref{badbone} would not be a proper choice for a first tile to remove. Second, we require that the tile being removed is on the boundary of $R$ so that we do not turn a simply-connected region into a non-simply-connected region (we want to avoid punctures). 

\begin{figure}[h!]
	\centering
	\begin{tikzpicture}[scale = 0.4]
		\draw[opacity = 0, fill opacity = 50, fill = lime!50] (0, 0) \dX\dz\dz\dY\dZ\dZ\dZ -- cycle;
\draw[very thick, color = blue] (0,0) \dy\dy\dX\dY\dY\dz\dz\dY\dx\dZ\dy\dZ -- cycle;
	\end{tikzpicture}
	\caption{A bad choice of bone to remove}
	\label{badbone}
\end{figure}
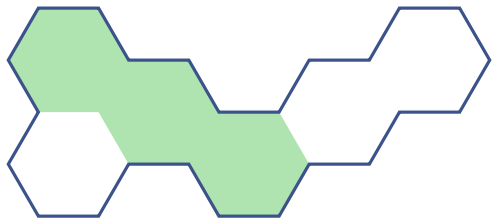 

\begin{remark}
	The contribution of any tile in Figures \ref{alltiles} and \ref{alltilesXYZ} is $I$, the $2 \times 2$ identity matrix. This is a direct matrix computation, but is an important fact used in the following results. 
\end{remark}
\begin{lemma}
	Removing a bone or a snake from a region $R$ does not change the contribution of $\partial R$, and removing a stone negates the product. 	\label{outandback}
\end{lemma}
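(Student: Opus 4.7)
My plan is to track how $\partial R$ changes when a tile $T$ is removed, and translate the change into matrix products using the preceding Remark.

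First I would decompose cyclically $\partial R = P \cdot S$, where $S$ consists of the edges of $\partial R$ shared with $\partial T$, and $P$ is the complementary arc. Under the hypotheses stated just before the lemma (removing $T$ preserves both connectedness and simple-connectedness), $S$ is a single contiguous arc. Both $\partial R$ and $\partial T$ are traversed counterclockwise with their interiors on the left, and along $S$ those interiors agree locally, so $S$ has the same orientation in both. This lets me write $\partial T = S \cdot Q$, where $Q$ is the part of $\partial T$ strictly interior to $R$. The boundary of the new region is then $\partial(R \setminus T) = P \cdot Q^{-1}$: one follows $P$ as before, then traverses $Q$ in reverse, because after removing $T$ the region $R \setminus T$ sits on the opposite side of those edges from where $T$ was.

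Next I translate to matrix products. With $M_{\partial R} = M_P M_S$ and $M_{\partial T} = M_S M_Q$, we get
\[
M_{\partial(R \setminus T)} = M_P M_Q^{-1} = M_P \bigl(M_S^{-1} M_{\partial T}\bigr)^{-1} = M_P M_{\partial T}^{-1} M_S.
\]
By the preceding Remark, $M_{\partial T}$ is a scalar multiple of the identity and hence central in $\SL_2(\C)$, so it commutes past $M_P$:
\[
M_{\partial(R \setminus T)} = M_{\partial T}^{-1} \cdot M_P M_S = M_{\partial T}^{-1} \cdot M_{\partial R}.
\]
For bones and snakes, $M_{\partial T} = I$, so $M_{\partial(R \setminus T)} = M_{\partial R}$ and the matrix product---hence its trace, the contribution of $\partial R$---is unchanged. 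For stones, $M_{\partial T} = -I$, and $M_{\partial(R \setminus T)} = -M_{\partial R}$, which negates the product.

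The hard part will be pinning down the correct sign of $M_{\partial T}$ for each tile shape. This reduces to a finite matrix check---three orientations of bones, two of stones, and six of snakes---essentially the direct computation mentioned in the Remark; extracting the sign $-1$ specifically for stones is what distinguishes their case in the conclusion. A minor secondary technicality is to verify that under the stated hypotheses $T \cap \partial R$ really is a single arc, which is what lets us do the clean decomposition $\partial R = P \cdot S$ in the first place.
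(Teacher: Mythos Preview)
Your proof is correct and takes essentially the same approach as the paper: decompose $\partial R$ into the shared arc $S$ (the paper's $Y_1$) and its complement, then use that the tile's boundary product is $\pm I$ to pass from $M_{\partial R}$ to $M_{\partial(R\setminus T)}$---the paper does this by inserting $Y_2^{-1}Y_2$ and regrouping, you by invoking centrality of $\pm I$, which is the same algebraic step. Your remark that the stone sign must be verified directly is on point: the paper's proof itself states that stones contribute $-I$ (refining the preceding Remark) via exactly the finite matrix check you describe.
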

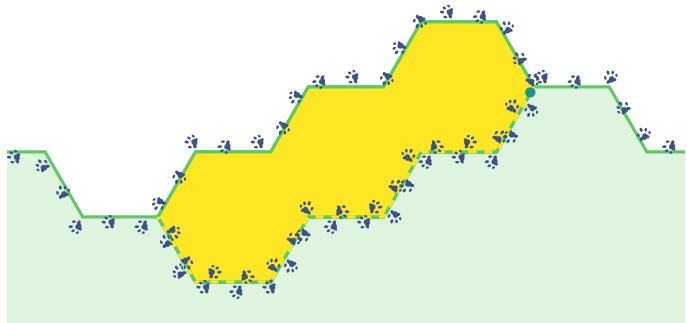
\begin{figure}[h!]
	\centering
	\begin{tikzpicture}[scale = 0.5]
		\clip (-1, 3) rectangle (-19, -8);

\draw[very thick, fill = gray, fill opacity = 0.2, color = lime] (0,0) \dx\dz\dz\dY\dY\dY\dZ\dy\dy -- ++(1, 1.73) -- ++(-1, -1.73) \dY\dY\dz\dz\dz -- (-100, 0) -- (-100, -100) -- (0, -100) -- (10, 0) -- cycle;
\draw[very thick, color = lime, fill = yellow] (-6, -2*1.73) \dX\dY\dY\dY\dZ\dy\dy -- cycle;
\draw[very thick, color = yellow, dashed] (-5, -1*1.73) -- ++(-1, -1.73) \dY\dY\dz;
\node[color = green] at (-5.1, -1.9) {$\bullet$};
\fill[color = blue, decorate, decoration = {footprints, foot of = felis silvestris, stride length = 25pt, foot length = 5pt, foot sep = -4pt}] (0,0.2) \dx\dz\dz\dY\dY\dY\dZ -- ++(1, 1.73) -- ++(1.9, 0) \dy -- ++(1, 1.73) -- ++(0.2, -0.4) -- ++(-1, -1.73) \dY\dY -- ++(-0.2, 0) \dz\dz\dz;
	\end{tikzpicture}
	\caption[Out-and-back path along a bone]{The path along $\partial R$, then all the way around a bone, then back out along the bone to $\partial R$ again}
	\label{fig:outandback}
\end{figure}
\begin{proof}
	Consider any region $R$, and the contribution of $\partial R$ before removing a tile. This path will be a matrix product, and since we are allowed cyclic permutations we can choose the one starting at a chosen boundary point of $R$ not in or adjacent to the tile we want to remove. Let $X_1$ be the product of connection matrices from that chosen point until the start of the tile, $Y_1$ the common boundary of the tile and $R$, and then $X_2$ the rest of the boundary of $R$ up until the chosen point. So we can write our matrix product as $X_2Y_1X_1$. Now let the path around the tile in its entirety be $y_1 \cdot \ldots \cdot y_j \cdot \ldots \cdot y_k$, where $Y_1 = y_1 \cdot \ldots \cdot y_j$ and $Y_2 = y_{j+1} \cdot \ldots \cdot y_k$ (and the edge corresponding to $y_k$ shares a vertex with the edge corresponding to $y_1$). So, in other words, $Y_1$ is the common boundary between the tile and $R$, and $Y_2$ is the boundary of the tile that is in the interior of $R$. 
	
	Now we can consider walking around $R$ via $X_1$, then $Y_1$, and then instead of immediately following $X_2$ we follow the path corresponding to $Y_2$ and its inverse. So we insert $I = Y_2^{-1}Y_2$, and get $X_2Y_2^{-1}Y_2Y_1X_1$.

	The path around each tile is given in Figure \ref{alltiles}, and if we compute out those matrix products, then the bones and snakes have a product equal to $I$, the $2 \times 2$ identity matrix, and the stones $-I$. So removing a bone or a snake allows us to replace $Y_1Y_2$ with $I$, and a stone allows us to replace $Y_1Y_2$ with $-I$. So if the tile we would like to remove is a bone or snake, we have $X_2Y^{-1}_2Y_2Y_1X_1 = X_2Y^{-1}_2X_1$, and for the stone $X_2Y^{-1}_2Y_2Y_1X_1 = X_2Y_2^{-1}(-I)X_1 = -X_2Y_2^{-1}X_1$. 
\end{proof}

\begin{theorem}
	If a region $R$ has a signed tiling by the stone, bone, and snake tiles, then $\partial R$ contributes $\pm I$, the $2 \times 2$ identity matrix. Hence, $\partial R = \pm I$ is a necessary condition for $R$ to be signed tilable by the stone, bone, and snake tiles. 
	\label{trace2}
\end{theorem}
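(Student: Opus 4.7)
The plan is to express $\partial R$ in the free group on edge labels as a product of conjugates of tile boundaries (with signs), and then apply the $\SL_2(\C)$ representation to evaluate the whole product at once.

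First, the signed tiling of $R$ by tiles $T_{i_1}, \dots, T_{i_n}$ with signs $\epsilon_1, \dots, \epsilon_n$ provides an identity in the free group $F$ of the form $\partial R = \prod_{j=1}^n W_j (\partial T_{i_j})^{\epsilon_j} W_j^{-1}$, where each $W_j$ is a word routing the chosen basepoint of $\partial R$ to the tile $T_{i_j}$. Applying the $\SL_2(\C)$ representation given by the matrices $\alpha, \beta, \gamma$, each tile boundary $\partial T_{i_j}$ evaluates to $\pm I$ by the preceding Remark, and $\pm I$ is central in $\SL_2(\C)$, so every conjugate $W_j (\partial T_{i_j})^{\epsilon_j} W_j^{-1}$ evaluates to $\pm I$ as well. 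A product of finitely many $\pm I$ is still $\pm I$, which gives the theorem.

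An equivalent formulation is by induction on the number of tiles: the base case (empty tiling on empty region) contributes $I$, and Lemma \ref{outandback} handles the inductive step by peeling off one tile and multiplying the matrix contribution of $\partial R$ by $\pm I$ (namely $+I$ for a bone or snake and $-I$ for a stone, each raised to the tile's weight $\epsilon$).

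The main obstacle is establishing either the free-group identity above or, equivalently, extending Lemma \ref{outandback} from the standard-tiling setting to the signed-tiling setting. A tile in a signed tiling may overlap other tiles, protrude outside $R$, or carry weight $-1$ (in which case ``removing'' it enlarges rather than shrinks the effective region), none of which is covered by the lemma as stated. I would handle this by passing from honest simply-connected regions to formal $\Z$-chains of hexes and tracking the boundary map at the chain level, then picking any tile in the signed tiling and peeling it off the chain. The algebraic heart of Lemma \ref{outandback}---inserting $Y_2^{-1} Y_2 = I$ and replacing $Y_1 Y_2$ by $\pm I$---goes through verbatim once this bookkeeping is in place, because $\pm I$ is central in $\SL_2(\C)$ and commutes with every other factor in the matrix product, so overlaps and protrusions do not interfere with the identity being used.
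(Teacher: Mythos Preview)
Your first approach---writing $\partial R$ in the free group as a product of conjugates of tile boundaries and then evaluating under the $\SL_2(\C)$ representation---is correct and is the cleaner argument for the signed setting. It rests on the Conway--Lagarias fact (cited in the paper) that signed tilability is equivalent to $\partial R$ lying in the tile group $T(\Sigma)$, i.e.\ in the normal closure of the tile boundaries; once that is granted, centrality of $\pm I$ finishes the proof in one line, with no hypothesis on overlaps, protrusions, or tile weights.

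The paper takes your second route instead: induction on the number of tiles, peeling one tile at a time via Lemma~\ref{outandback} until a single tile remains. As you correctly diagnose in your third paragraph, Lemma~\ref{outandback} is stated and proved for a tile sitting along the boundary of a simply-connected region, and the paper does not explain how to extend it to the signed case (negative weights, tiles outside $R$, overlapping tiles). So the gap you flag is real and actually present in the paper's own proof; your free-group argument sidesteps it entirely. What the paper's inductive approach buys is a concrete, pictorial account that makes the sign contribution of each tile type (stone vs.\ bone/snake) visible at every step, which feeds directly into the subsequent corollaries; what your approach buys is a proof that is genuinely valid for signed tilings without the chain-level bookkeeping you sketch at the end.
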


\begin{proof}
	We prove this by induction. Start with the base case of a single tile, which is tilable by definition. Then assume that some region $R$ is tilable by the stone, bone, and snake tiles, and that $\partial R = I$. Lemma \ref{outandback} says that we may remove a tile from the boundary of $R$ and that performing this action affects $\partial R$ only by a possible sign alternation. So if $R'$ is the region with the tile removed, then $\partial R = \partial R'$ if the tile removed was a stone or snake, and $\partial R = -\partial R'$ if the tile removed was a stone. Since $R$ is tilable, we can continue removing one tile at a time in this manner until we have a single tile remaining. 
\end{proof}

\begin{corollary}
	If a region $R$ is (regularly) tilable by stones, bones, and snakes, then $\partial R$ contributes $\pm I$. 
	\label{trace2-tilable}
\end{corollary}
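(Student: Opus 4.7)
The plan is to derive this corollary as an immediate specialization of Theorem \ref{trace2}. By the definition of a standard tiling, every tile carries weight $+1$, so a regular tiling is precisely a signed tiling in which all weights happen to equal one. Therefore the hypothesis of Corollary \ref{trace2-tilable} is a strictly stronger version of the hypothesis of Theorem \ref{trace2}, and the conclusion transfers verbatim.

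Concretely, I would write: suppose $R$ admits a standard tiling by stones, bones, and snakes. Then by Definition \ref{signed tiling} together with the definition of a standard tiling, this is in particular a signed tiling of $R$. Applying Theorem \ref{trace2} to this signed tiling gives $\partial R = \pm I$, which is the desired conclusion.

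There is no substantive obstacle here; the only thing to be careful about is making the logical implication (standard $\Rightarrow$ signed) explicit, since the reader needs to see that the matrix computation done inductively in Theorem \ref{trace2} (which allows for sign flips coming from removing stones) still applies even though in the standard case all tile weights are $+1$. The sign that appears in the conclusion is an artifact of Lemma \ref{outandback}, not of the tile weights, so it persists in the standard-tiling setting as well.
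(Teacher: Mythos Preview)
Your proposal is correct and matches the paper's own argument: both observe that a standard tiling is in particular a signed tiling (all weights $+1$), so Theorem \ref{trace2} applies directly to yield $\partial R = \pm I$. Your extra remark that the $\pm$ arises from Lemma \ref{outandback} rather than from the tile weights is accurate and a helpful clarification, but the core logic is identical.
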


\begin{proof}
	Because signed tilability is a necessary criterion for a region to have a regular tiling, then Corollary \ref{trace2-tilable} follows directly from Theorem \ref{trace2}. 
\end{proof}
\begin{corollary}
	If a signed tiling for $R$ exists using only bones and snakes, then $\partial R = I$. 
	\label{tilingtotrace}
\end{corollary}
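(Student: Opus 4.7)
The plan is to refine the induction in the proof of Theorem \ref{trace2}, tracking the sign of $\partial R$ more carefully. Theorem \ref{trace2} already gives $\partial R = \pm I$, so all that remains is to rule out the minus sign under the stronger hypothesis that the signed tiling uses only bones and snakes.

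For the base case, I would note that a region consisting of a single bone or a single snake satisfies $\partial R = I$ (not merely $\pm I$). This is a direct matrix computation from the explicit boundary words recorded in Figure \ref{alltiles} together with the definitions of $\alpha$, $\beta$, $\gamma$; indeed it is precisely the case distinction inside the proof of Lemma \ref{outandback}, which shows that every bone and every snake evaluates to $I$, while stones evaluate to $-I$.

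For the inductive step, I would take a region $R$ with a signed tiling by bones and snakes containing at least two tiles, peel off one tile to obtain a smaller region $R'$ (following the same peeling procedure used in the proof of Theorem \ref{trace2}), and observe that $R'$ still admits a signed tiling using only bones and snakes. Because the peeled tile is a bone or snake, Lemma \ref{outandback} tells us that the contribution of $\partial R$ equals the contribution of $\partial R'$ with no sign change; the sign-flipping case of the lemma is triggered only by stones, which are absent by hypothesis. Applying the inductive hypothesis to $R'$ yields $\partial R' = I$, and hence $\partial R = I$, as required.

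The main obstacle is essentially the same one that Theorem \ref{trace2} already contends with, namely verifying that peeling a single tile produces a valid smaller region admitting a signed tiling by the expected tile classes — here, bones and snakes only. Once the theorem's inductive scaffolding is taken as given, the only new content specific to this corollary is the sign tracking, and this is immediate from Lemma \ref{outandback}: no stones are ever removed, so the sign never flips, and the $\pm I$ of Theorem \ref{trace2} collapses to $+I$.
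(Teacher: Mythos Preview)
Your proposal is correct and follows essentially the same approach as the paper: the paper's proof simply says to ``trace through the proof of Theorem~\ref{trace2} forbidding the use of a stone tile,'' noting that at every step $\partial R = \partial R' = \cdots = I$ since removing a bone or snake does not change the matrix product. You have spelled out this induction in more detail, but the content is identical.
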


\begin{proof}
	Trace through the proof of Theorem \ref{trace2} forbidding the use of a stone tile. At every step in the argument $\partial R = \partial R' = \partial R'' = \ldots = I$ since removing a bone or a snake does not change the matrix product. 
\end{proof}

\begin{corollary}
	If $R$ is signed tilable with bones and snakes and requires exactly one stone, then $\partial R = -I$. 
\end{corollary}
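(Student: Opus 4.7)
The plan is to run the same inductive argument as in Theorem \ref{trace2} and Corollary \ref{tilingtotrace}, with the single additional piece of bookkeeping being the sign flip contributed by the one stone. I would induct on the number of tiles in the signed tiling and use Lemma \ref{outandback} to peel off one boundary tile at a time. Each removal of a bone or snake leaves $\partial R$ unchanged, while the removal of the stone negates it.

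Concretely, if we label the removal order $t_1, t_2, \ldots, t_n$ with $R_0 = R$ and $R_k$ obtained from $R_{k-1}$ by removing $t_k$, Lemma \ref{outandback} gives $\partial R_k = \varepsilon_k \partial R_{k-1}$ where $\varepsilon_k = -1$ when $t_k$ is a stone and $\varepsilon_k = +1$ otherwise. After $n-1$ steps only a single tile $t_n$ remains, so $\partial R_{n-1} = \partial(t_n)$, and therefore
\[
\partial R \;=\; \varepsilon_1 \varepsilon_2 \cdots \varepsilon_{n-1} \cdot \partial(t_n).
\]
The hypothesis of exactly one stone breaks into two cases. If $t_n$ is the stone, then all of $\varepsilon_1, \ldots, \varepsilon_{n-1}$ equal $+1$ and $\partial(t_n) = -I$ by the direct matrix computation recorded in Lemma \ref{outandback}, giving $\partial R = -I$. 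If $t_n$ is a bone or snake, then exactly one of $\varepsilon_1, \ldots, \varepsilon_{n-1}$ equals $-1$ and $\partial(t_n) = I$, again giving $\partial R = -I$.

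The only real obstacle is the one already present in Theorem \ref{trace2}: guaranteeing that at each stage we can actually find a boundary tile whose removal neither disconnects $R_{k-1}$ nor creates a puncture. That is exactly the geometric ingredient already granted by the proof of Theorem \ref{trace2}, so there is nothing new to verify here — we inherit the removal procedure and simply layer the parity count on top. In this sense the corollary is a direct companion to Corollary \ref{tilingtotrace}: zero stones produces $+I$, and adding a single stone flips this to $-I$.
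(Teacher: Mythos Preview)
Your proof is correct and follows exactly the paper's approach: trace through the removal procedure of Theorem \ref{trace2}/Corollary \ref{tilingtotrace}, keeping track of the single sign flip introduced by the lone stone. The paper's proof is the one-line version of what you wrote; your case split on whether the stone is the last tile removed and the explicit product $\partial R = \varepsilon_1\cdots\varepsilon_{n-1}\,\partial(t_n)$ are just a more careful unpacking of the same argument.
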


\begin{proof}
	If a signed tiling for $R$ exists that requires the use of exactly one stone, then the step that requires removing the stone flips the sign of $\partial R$. 
\end{proof}

\begin{conjecture}
	The sign of $\partial R$ corresponds with the parity of the minimum number of stones needed for $R$ to be signed tilable when adding tiles of either weight along the boundary. So $\partial R = (-1)^{\text{\#\{stones needed\}}}\cdot I$.
	\label{full tiling}
\end{conjecture}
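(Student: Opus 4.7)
The plan is to upgrade the inductive argument of Theorem~\ref{trace2} so that it keeps careful track of how many stones are used. Given any signed tiling $\tau$ of $R$ that uses exactly $n_\tau$ stone tiles (together with arbitrarily many bones and snakes), I would iterate Lemma~\ref{outandback}: at each step remove a boundary tile from $\tau$, which leaves $\partial R$ unchanged when the removed tile is a bone or snake and negates it when the removed tile is a stone. When only one tile $T_{\mathrm{last}}$ of $\tau$ remains, $\partial R$ has been multiplied by $(-1)^{n_\tau - \epsilon}$ relative to $\partial T_{\mathrm{last}}$, where $\epsilon = 1$ if $T_{\mathrm{last}}$ is a stone and $0$ otherwise. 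Since $\partial T_{\mathrm{last}} = (-1)^\epsilon I$, this collapses to $\partial R = (-1)^{n_\tau}\cdot I$.

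Because that identity holds for \emph{every} signed tiling $\tau$ of $R$, the integer $n_\tau$ is pinned modulo $2$ by the fixed matrix $\partial R$, independent of which tiling is chosen. In particular the minimum $n_{\min} := \min_\tau n_\tau$ has that same parity, and therefore $\partial R = (-1)^{n_{\min}}\cdot I$, which is exactly the statement of the conjecture.

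The main obstacle is that Lemma~\ref{outandback} was phrased for a tile sitting on the boundary of $R$ in the standard-tiling sense, while a general signed tiling admits tiles of weight $-1$, overlapping supports, and pieces extending outside $R$. To turn the inductive sketch above into a rigorous proof one must verify that a removable boundary tile of $\tau$ always exists and that its removal yields a strictly smaller signed tiling of a simply-connected region, avoiding obstructions like the ``bad bone'' of Figure~\ref{badbone}. If this combinatorial reduction cannot be carried out uniformly across all signed tilings, a cleaner algebraic detour is to express $\partial R$ directly as a product $\prod_i W_i (\partial T_i)^{w_i} W_i^{-1}$ in the free group $F$ encoding the signed tiling (as is done for standard tilings in \cite{conway-lagarias}), apply the matrix representation $\rho \colon F \to \SL_2(\C)$ sending edge labels to $\alpha, \beta, \gamma$, and use that each $\rho(\partial T_i) \in \{\pm I\}$ is central in $\SL_2(\C)$. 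Centrality trivializes the conjugations $W_i(\cdot)W_i^{-1}$ at the matrix level, and the identity $(-I)^{-1} = -I$ makes the exponents $w_i = \pm 1$ irrelevant for stones, so $\rho(\partial R) = (-1)^{n_\tau}\cdot I$ falls out without any removal step at all; the same parity conclusion, and hence the conjecture, then follows immediately.
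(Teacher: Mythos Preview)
This statement is presented in the paper as a \emph{conjecture}, not a theorem; the paper offers no proof, only the discussion in Example~\ref{ex2x2x2} immediately afterward. So the question is whether your argument actually settles it.

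Your inductive argument via Lemma~\ref{outandback} is fine for signed tilings that can be built by adding and removing tiles one at a time along a simply-connected boundary, and for that restricted class the parity claim $\partial R=(-1)^{n_\tau}I$ follows exactly as in Corollary~\ref{tilingtotrace} and the one-stone corollary. You correctly flag that the obstruction is extending this to arbitrary signed tilings, where overlapping tiles of opposite sign need not admit such a peeling order. That obstruction is not a technicality: Example~\ref{ex2x2x2} exhibits the middle signed tiling of the $2\times 2\times 2$ hexagon, which uses exactly one stone and which the paper shows \emph{cannot} be assembled by boundary moves.

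Your algebraic detour does not escape this; in fact the same example refutes it outright. If every signed tiling $\tau$ yielded a free-group identity $\partial R=\prod_i W_i(\partial T_i)^{w_i}W_i^{-1}$ with one conjugate per tile of $\tau$, then applying $\rho$ and using that $\pm I$ is central would force $\rho(\partial R)=(-1)^{n_\tau}I$ for \emph{every} signed tiling of $R$. But for the $2\times2\times2$ region the middle tiling has $n_\tau=1$ while the rightmost has $n_\tau=0$, so $\rho(\partial R)$ would have to equal both $-I$ and $I$. The flaw is that the Conway--Lagarias factorization you invoke is proved for genuine (non-overlapping, unsigned) tilings, where one can peel tiles off in order; a signed tiling is only an identity of $\Z$-valued indicator functions---an abelian, homological statement---and there is in general no lift of it to a word identity in the non-abelian free group with the same tile multiplicities. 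So the detour has a genuine gap, and the conjecture remains open.
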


\begin{example}
	\label{ex2x2x2}
	One interesting example is the signed tiling of the $2 \times 2 \times 2$ hexagonal region, seen in Figure \ref{2x2x2}. This is one of the first examples we found that gave us some trouble, so it is worth diving into here in some detail. One useful way to talk about signed tiling is to use ``adding'' a tile to mean adding a tile with positive weight, and ``removing'' a tile to mean adding a tile with negative weight.
	
	First, we need to be careful about the way in which we tile these regions for our loop monodromy idea to hold. See the first two pictures on Figure \ref{2x2x2}. We can begin with the four tiles in the leftmost picture (three bones and one snake). By Corollary \ref{tilingtotrace}, then $\partial R'' = I$. We then remove the top stone ($\partial R' = -I$), and then the bottom stone, so we have that $\partial R'' = I$, as desired. We can make a similar argument for the right-most two images, removing only bones and snakes (not stones), so this removal does not change the sign of $\partial R$. The important part of the left and right images was that we could add or remove each tile individually along the boundary at each step - we still had a boundary that was a single closed loop around a simply-connected interior. However, the signed tiling of the middle picture behaves differently. 
	
	 Although the middle image exhibits a signed tiling (in the sense of \cite{conway-lagarias}) using one stone, it cannot be made by only adding or removing tiles along a boundary edge. If we start by adding the blue snake and purple bone tiles, we have the region shown in the left of Figure \ref{onestonetile}. Then, if we remove the green stone, we are left with the shape on the right of Figure \ref{onestonetile} We then cannot add in the bone we need without crossing the horizontal line between the two partial-bones. 
\end{example}

\begin{figure}[h!]
	\centering
	\resizebox{!}{1.1in}{
	$\begin{array}{c|c|c}
		\begin{tikzpicture}[scale = 0.25]
		\vbone{(0,0)}
		\vbone{(0, 6*1.73)}
		\vbone{(-3, 1.73)}
		\snake{(-6, 4*1.73)}{\dX\dy\dX\dX\dY\dx\dY\dx\dZ}
		\draw[dotted, line width = 4pt, yellow] (0, 4*1.73) \dX\dX\dz\dY\dY\dx\dZ\dZ\dy;
		\end{tikzpicture}
		\begin{tikzpicture}[scale = 0.25]
		\stone{(0,0)}{\dX\dX\dY\dY\dZ\dZ}
		\stone{(0,8*1.73)}{\dX\dX\dY\dY\dZ\dZ}
		\draw (0, 4*1.73) \dX\dX\dz\dY\dY\dx\dZ\dZ\dy;
		\draw[dotted, line width = 4pt, yellow] (0, 4*1.73) \dX\dX\dz\dY\dY\dx\dZ\dZ\dy;
		\end{tikzpicture}
		&
		\begin{tikzpicture}[scale = 0.25]
		\vbone{(0, 0)}
		\vbone{(-3, -1*1.73)}
		\snake{(-6, 0)} {\dX\dy\dX\dX\dY\dx\dY\dx\dZ};
		\draw[dotted, line width = 4pt, yellow] (0, 0*1.73) \dX\dX\dz\dY\dY\dx\dZ\dZ\dy;
		\node at (0, -3.8*1.73) {\textcolor{white}{.}};
		\end{tikzpicture}
		\begin{tikzpicture}[scale = 0.25]
		\stone {(0, 4*1.73)} {\dX\dz\dY\dx\dZ\dy};
		\draw[dotted, line width = 4pt, yellow] (0, 0*1.73) \dX\dX\dz\dY\dY\dx\dZ\dZ\dy;
		\node at (0, -3.8*1.73) {\textcolor{white}{.}};
		\end{tikzpicture}
		&
		\begin{tikzpicture}[scale = 0.25]
			\input{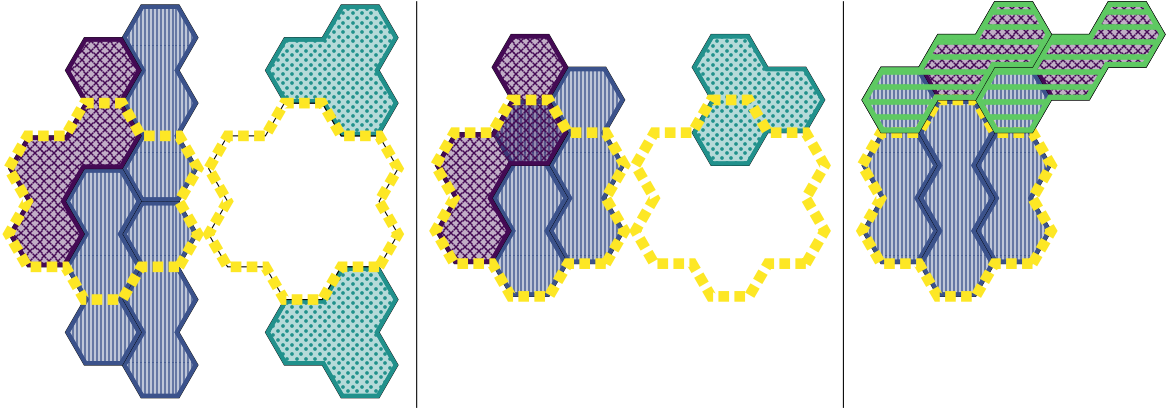}
			\node at (0, -3.8*1.73) {\textcolor{white}{.}};
			\end{tikzpicture}
	\end{array}$}
	\caption[Various $2\times2\times2$ signed tilings]{Three different ways of tiling the $2 \times 2 \times 2$ hexagonal grid. The first has two stones, then one stone, but the final example shows that a signed tiling exists using no stones. Thus $\partial R = I$.}
	\label{2x2x2}
\end{figure}
\begin{figure}[h!]
\centering
\resizebox{!}{2in}{
\begin{tikzpicture}[scale = 0.25]
	\input{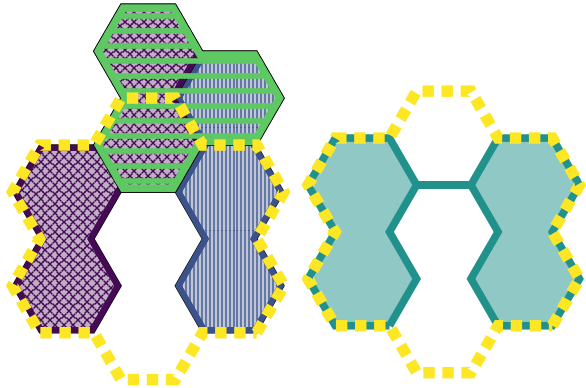}
\end{tikzpicture}
\begin{tikzpicture}[scale = 0.25]
	\draw[line width = 2pt, color = green, fill = green, fill opacity = 0.5] (0, 0) \dX\dy\dx\dZ\dX\dX\dY\dz\dY\dx\dZ;
\node at (0, -1.73) {};
\draw[line width = 3pt, dotted, color = yellow] (6, 0) \dX\dX\dz\dY\dY\dx\dZ\dZ\dy;
\end{tikzpicture}
}
\caption[Why you have to add/remove tiles on a boundary]{The middle tiling from Figure \ref{2x2x2}. }
\label{onestonetile}
\end{figure}

\section{Tiling groups}

From the previous section, we used matrices $\alpha$, $\beta$ and $\gamma$, which are edge connection matrices $\SL_2(\C)$. These can be used as generators to form a subgroup of $\SL_2(\C)$. However, some edge sequences are not possible without leaving the hexagonal grid (for example, we cannot walk $\alpha \to \gamma = \gamma\alpha$, as seen in Figure \ref{fig:matrix-weights}). 
Note that the set of all possible moves on the hexagonal grid consists of the following moves, with edge labels given by $\alpha$, $\beta$, and $\gamma$. \begin{align*}
	\alpha 	&\mapsto \beta 		&\qquad \alpha &\mapsto \gamma^{-1} &\qquad \alpha^{-1} &\mapsto \beta^{-1}&\qquad \alpha^{-1} &\mapsto \gamma\\
	\beta 	&\mapsto \gamma &\qquad \beta &\mapsto \alpha &\qquad 
	\beta^{-1} &\mapsto \gamma^{-1} &\qquad \beta^{-1} &\mapsto \alpha^{-1}\\
	\gamma &\mapsto \alpha^{-1} &\qquad \gamma &\mapsto \beta & \qquad
	\gamma^{-1} &\mapsto \alpha &\qquad \gamma^{-1} &\mapsto \beta^{-1}
\end{align*}

We now choose those paths of length two that begin at a shaded vertex (and as such also end at a shaded vertex). These relations are given in Definition \ref{def:XYZ}, and shown in Figure \ref{fig:XYZ}. This re-defining of generators allows us to have a more direct correspondence between a particular subgroup of $\SL_2(\C)$ and words corresponding to closed boundary paths on the hexagonal grid. Fortunately, this also allows us to simplify notation. An updated labeling of our tile set can be seen in Figure \ref{alltilesXYZ}. 

\begin{definition}
	\label{def:XYZ}
	Let $X := \beta\alpha$, $Y := (\alpha)^{-1}\gamma$, and $Z := (\beta)^{-1}(\gamma)^{-1}$. Additionally, let $x:= X^{-1}$, $y:= Y^{-1}$, and $z:= Z^{-1}$.  Additionally, let $G$ be the subgroup of $\SL_2(\C)$ generated by $X$, $Y$ and $Z$. 
	
	For a visual representation of these relations on the hexagonal grid, see Figure \ref{fig:XYZ}. 
\end{definition}

\begin{figure}[h!]
	\centering
	\begin{tikzpicture}[scale = 0.5]
		\draw[line width = 4pt, green, ->] (5, 1.73) \dX;
		\draw[line width = 4pt, green, ->] (11, 1*1.73) \dY;
		\draw[line width = 4pt, green, ->] (8, 4*1.73) \dZ;
		\draw[line width = 4pt, lime, ->] (14, 4*1.73) \dx;
		\draw[line width = 4pt, lime, ->] (17, 1*1.73) \dy;
		\draw[line width = 4pt, lime, ->] (20, 4*1.73) \dz;
		\node at (5, 2*1.73) {$X$};
		\node at (10, 0.5*1.73) {$Y$};
		\node at (9.5, 3.5*1.73) {$Z$};
		\node at (14, 3*1.73) {$x$};
		\node at (18.5, 1.2*1.73) {$y$};
		\node at (18.5, 4.5*1.73) {$z$};
		\foreach \i in {0, 6, 12, 18}{
			\foreach \j in {0, 2*1.73, 4*1.73} {
				\draw[fill = black] (\i + 2, \j) circle (0.15); 
				\draw[fill = white] (\i + 6, \j) circle (0.15); 
			}
		}
		\foreach \i in {3, 9, 15, 21}{
			\foreach \j in {2*1.73/2, 6*1.73/2, 10*1.73/2} {
				\draw[fill = white] (\i, \j) circle (0.15); 
				\draw[fill = black] (\i + 2, \j) circle (0.15); 
			}
		}
	\end{tikzpicture}
	
	\caption{The moves $X$, $Y$, and $Z$}
	\label{fig:XYZ}
\end{figure}
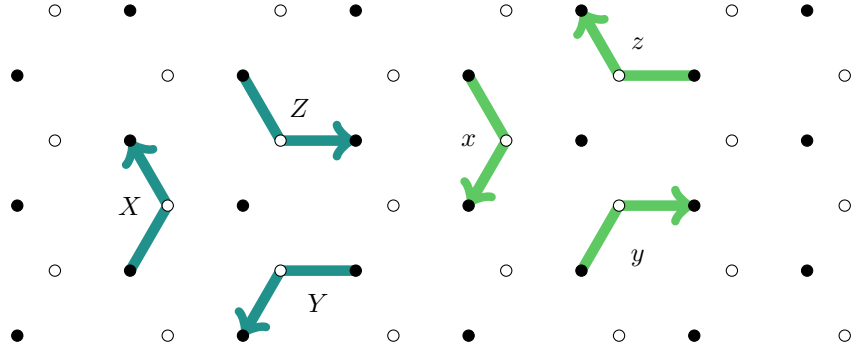

\begin{figure}[h!]\centering\small
	$\begin{array}{c c c}
		\begin{tikzpicture}[scale = 0.15]
			\def \hex {+(0, 0) -- +(2, 0) -- +(3, 1*1.73) -- +(2, 2*1.73) -- +(0, 2*1.73) -- +(-1, 1*1.73) -- +(0, 0)};
			\draw[very thin] (0, 0)     \hex;
			\draw[very thin] (-3, 1*1.73) \hex;
			\draw[very thin] (-6, 2*1.73) \hex;
			\draw[color = blue, line width = 2pt] (0, 0) -- ++(2, 0) -- ++(1, 1.73) -- ++(-1, 1.73) -- ++(-2, 0) -- ++(-1, 1.73) -- ++(-2, 0) -- ++(-1, 1.73) -- ++(-2, 0) -- ++(-1, -1.73) -- ++(1, -1.73) -- ++(2, 0) -- ++(1, -1.73) -- ++(2, 0) -- ++(1, -1.73) -- cycle;
			\node at (2, 0) {$\bullet$};
		\end{tikzpicture}
		&
		\begin{tikzpicture}[scale = 0.15]
			\def \hex {+(0, 0) -- +(2, 0) -- +(3, 1*1.73) -- +(2, 2*1.73) -- +(0, 2*1.73) -- +(-1, 1*1.73) -- +(0, 0)};
			\draw[very thin] (0, 0)     \hex;
			\draw[very thin] (0, 2*1.73) \hex;
			\draw[very thin] (0, 4*1.73) \hex;
			\draw[color = blue, line width = 2pt] (0, 0) -- ++(2, 0) -- ++(1, 1.73) -- ++(-1, 1.73) -- ++(1, 1.73) -- ++(-1, 1.73) -- ++(1, 1.73) -- ++(-1, 1.73) -- ++(-2, 0) -- ++(-1, -1.73) -- ++(1, -1.73) -- ++(-1, -1.73) -- ++(1, -1.73) -- ++(-1, -1.73) -- cycle;
			\node at (2, 0) {$\bullet$};
		\end{tikzpicture}
		&	
		\begin{tikzpicture}[scale = 0.15]
			\def \hex {+(0, 0) -- +(2, 0) -- +(3, 1*1.73) -- +(2, 2*1.73) -- +(0, 2*1.73) -- +(-1, 1*1.73) -- +(0, 0)};
			\draw[very thin] (-2, 0)     \hex;
			\draw[very thin] (1, 1*1.73) \hex;
			\draw[very thin] (4, 2*1.73) \hex;
			\draw[color = blue, line width = 2pt] (0, 0) -- ++(-2, 0) -- ++(-1, 1.73) -- ++(1, 1.73) -- ++(2, 0) -- ++(1, 1.73) -- ++(2, 0) -- ++(1, 1.73) -- ++(2, 0) -- ++(1, -1.73) -- ++(-1, -1.73) -- ++(-2, 0) -- ++(-1, -1.73) -- ++(-2, 0) -- ++(-1, -1.73) -- cycle;
			\node at (0, 0) {$\bullet$};
		\end{tikzpicture}
		\\
		ZZZYzzX
		&
		ZxxYXXX
		& 
		ZYYYXyy
	\end{array}$
	
	$\begin{array}{c c}
		\begin{tikzpicture}[scale = 0.15]
			\def \hex {+(0, 0) -- +(2, 0) -- +(3, 1*1.73) -- +(2, 2*1.73) -- +(0, 2*1.73) -- +(-1, 1*1.73) -- +(0, 0)};
			\draw[very thin] (0, 0)     \hex;
			\draw[very thin] (3, -1*1.73) \hex;
			\draw[very thin] (3, 1*1.73) \hex;
			\draw[color = blue, line width = 2pt] (0, 0) -- ++(2, 0) -- ++(1, -1.73) -- ++(2, 0) -- ++(1, 1.73) -- ++(-1, 1.73) -- ++(1, 1.73) -- ++(-1, 1.73) -- ++(-2, 0) -- ++(-1, -1.73) -- ++(-2, 0) -- ++(-1, -1.73) -- ++(1, -1.73) -- cycle;
			\node at (5, -1.73) {$\bullet$};
		\end{tikzpicture}
		&
		\begin{tikzpicture}[scale = 0.15]
			\def \hex {+(0, 0) -- +(2, 0) -- +(3, 1*1.73) -- +(2, 2*1.73) -- +(0, 2*1.73) -- +(-1, 1*1.73) -- +(0, 0)};
			\draw[very thin] (-2, 0)     \hex;
			\draw[very thin] (-5, -1*1.73) \hex;
			\draw[very thin] (-5, 1*1.73) \hex;
			\draw[color = blue, line width = 2pt] (0, 0) -- ++(-2, 0) -- ++(-1, -1.73) -- ++(-2, 0) -- ++(-1, 1.73) -- ++(1, 1.73) -- ++(-1, 1.73) -- ++(1, 1.73) -- ++(2, 0) -- ++(1, -1.73) -- ++(2, 0) -- ++(1, -1.73) -- ++(-1, -1.73) -- cycle;
			\node at (-3, -1.73) {$\bullet$};
		\end{tikzpicture}
		\\
		ZZYYXX
		&
		yZxYzX
	\end{array}$
	
	$\begin{array}{c c c}
		\begin{tikzpicture}[scale = 0.15]
			\def \hex {+(0, 0) -- +(2, 0) -- +(3, 1*1.73) -- +(2, 2*1.73) -- +(0, 2*1.73) -- +(-1, 1*1.73) -- +(0, 0)};
			\draw[very thin] (0, -2*1.73) \hex;
			\draw[very thin] (3, -3*1.73) \hex;
			\draw[very thin] (6, -2*1.73) \hex;
			\draw[very thin] (9, -3*1.73) \hex;
			\draw[color = blue, line width = 2pt] (0, 0) -- ++(2, 0) -- ++(1, -1.73) -- ++(2, 0) -- ++(1, 1.73) -- ++(2, 0) -- ++(1, -1.73) -- ++(2, 0) -- ++(1, -1.73)  -- ++(-1, -1.73) -- ++(-2, 0) -- ++(-1, 1.73) -- ++(-2, 0) -- ++(-1, -1.73) -- ++(-2, 0) -- ++(-1, 1.73) -- ++(-2, 0) -- ++(-1, 1.73) -- cycle;
			\node at (11, -3*1.73) {$\bullet$};
		\end{tikzpicture}
		&
		\begin{tikzpicture}[scale = 0.15]
			\def \hex {+(0, 0) -- +(2, 0) -- +(3, 1*1.73) -- +(2, 2*1.73) -- +(0, 2*1.73) -- +(-1, 1*1.73) -- +(0, 0)};
			\draw[very thin] (-2, 0) \hex;
			\draw[very thin] (-2, 2*1.73) \hex;
			\draw[very thin] (-5, 3*1.73) \hex;
			\draw[very thin] (-5, 5*1.73) \hex;
			\draw[color = blue, line width = 2pt] (0, 0) -- ++(-2, 0) -- ++(-1, 1.73) -- ++(1, 1.73) -- ++(-1, 1.73) -- ++(-2, 0) -- ++(-1, 1.73) -- ++(1, 1.73) -- ++ (-1, 1.73) -- ++(1, 1.73) -- ++(2, 0) -- ++(1, -1.73) -- ++(-1, -1.73) -- ++(1, -1.73) -- ++(2, 0) -- ++(1, -1.73) -- ++(-1, -1.73) -- ++(1, -1.73) -- cycle;
			\node at (0, 0) {$\bullet$};
		\end{tikzpicture}
		&
		\begin{tikzpicture}[scale = 0.15]
			\def \hex {+(0, 0) -- +(2, 0) -- +(3, 1*1.73) -- +(2, 2*1.73) -- +(0, 2*1.73) -- +(-1, 1*1.73) -- +(0, 0)};
			\draw[very thin] (0, 0) \hex;
			\draw[very thin] (3, 1*1.73) \hex;
			\draw[very thin] (6, 0*1.73) \hex;
			\draw[very thin] (9, 1*1.73) \hex;
			\draw[color = blue, line width = 2pt] (0, 0) -- ++(2, 0) -- ++(1, 1.73) -- ++(2, 0) -- ++(1, -1.73) -- ++(2, 0) -- ++(1, 1.73) -- ++(2, 0) -- ++(1, 1.73)  -- ++(-1, 1.73) -- ++(-2, 0) -- ++(-1, -1.73) -- ++(-2, 0) -- ++(-1, 1.73) -- ++(-2, 0) -- ++(-1, -1.73) -- ++(-2, 0) -- ++(-1, -1.73) -- cycle;
			\node at (11, 1.73) {$\bullet$};
		\end{tikzpicture}
		\\
		ZyZZYzYzX
		&
		ZxZxYXzXX
		&
		yZyZYYzYX
		\\
		\begin{tikzpicture}[scale = 0.15]
			\def \hex {+(0, 0) -- +(2, 0) -- +(3, 1*1.73) -- +(2, 2*1.73) -- +(0, 2*1.73) -- +(-1, 1*1.73) -- +(0, 0)};
			\draw[very thin] (0, 0) \hex;
			\draw[very thin] (-3, 1*1.73) \hex;
			\draw[very thin] (-3, 3*1.73) \hex;
			\draw[very thin] (-6, 4*1.73) \hex;
			\draw[color = blue, line width = 2pt] (0, 0) -- ++(2, 0) -- ++(1, 1.73) -- ++(-1, 1.73) -- ++(-2, 0) -- ++(-1, 1.73) -- ++(1, 1.73) -- ++(-1, 1.73) -- ++(-2, 0) -- ++(-1, 1.73) -- ++(-2, 0) -- ++(-1, -1.73) -- ++(1, -1.73) -- ++(2, 0) -- ++(1, -1.73) -- ++(-1, -1.73) -- ++(1, -1.73) -- ++(2, 0) -- cycle;
			\node at (2, 0) {$\bullet$};
		\end{tikzpicture}
		&
		\begin{tikzpicture}[scale = 0.15]
			\def \hex {+(0, 0) -- +(2, 0) -- +(3, 1*1.73) -- +(2, 2*1.73) -- +(0, 2*1.73) -- +(-1, 1*1.73) -- +(0, 0)};
		\draw[very thin] (0, 0) \hex;
		\draw[very thin] (0, 2*1.73) \hex;
		\draw[very thin] (3, 3*1.73) \hex;
		\draw[very thin] (3, 5*1.73) \hex;
		\draw[color = blue, line width = 2pt] (0, 0) -- ++(2, 0) -- ++(1, 1.73) -- ++(-1, 1.73) -- ++(1, 1.73) -- ++(2, 0) -- ++(1, 1.73) -- ++(-1, 1.73) -- ++ (1, 1.73) -- ++(-1, 1.73) -- ++(-2, 0) -- ++(-1, -1.73) -- ++(1, -1.73) -- ++(-1, -1.73) -- ++(-2, 0) -- ++(-1, -1.73) -- ++(1, -1.73) -- ++(-1, -1.73) -- cycle;
		\node at (2, 0) {$\bullet$};
		\end{tikzpicture}
		&
		\begin{tikzpicture}[scale = 0.15]
			\def \hex {+(0, 0) -- +(2, 0) -- +(3, 1*1.73) -- +(2, 2*1.73) -- +(0, 2*1.73) -- +(-1, 1*1.73) -- +(0, 0)};
			\draw[very thin] (-2, 0) \hex;
			\draw[very thin] (1, 1*1.73) \hex;
			\draw[very thin] (1, 3*1.73) \hex;
			\draw[very thin] (4, 4*1.73) \hex;
			\draw[color = blue, line width = 2pt] (0, 0) -- ++(-2, 0) -- ++(-1, 1.73) -- ++(1, 1.73) -- ++(2, 0) -- ++(1, 1.73) -- ++(-1, 1.73) -- ++(1, 1.73) -- ++(2, 0) -- ++(1, 1.73) -- ++(2, 0) -- ++(1, -1.73) -- ++(-1, -1.73) -- ++(-2, 0) -- ++(-1, -1.73) -- ++(1, -1.73) -- ++(-1, -1.73) -- ++(-2, 0) -- cycle;
			\node at (3, 1.73) {$\bullet$};
		\end{tikzpicture}
		\\
		ZZxZYzXzX
		&
		ZxYxYXXyX
		& 
		yZYxYYXyX
		
	\end{array}$\normalsize
	\caption{Figure \ref{alltiles} after rewriting in terms of $X$, $Y$, and $Z$}
	\label{alltilesXYZ}
\end{figure}
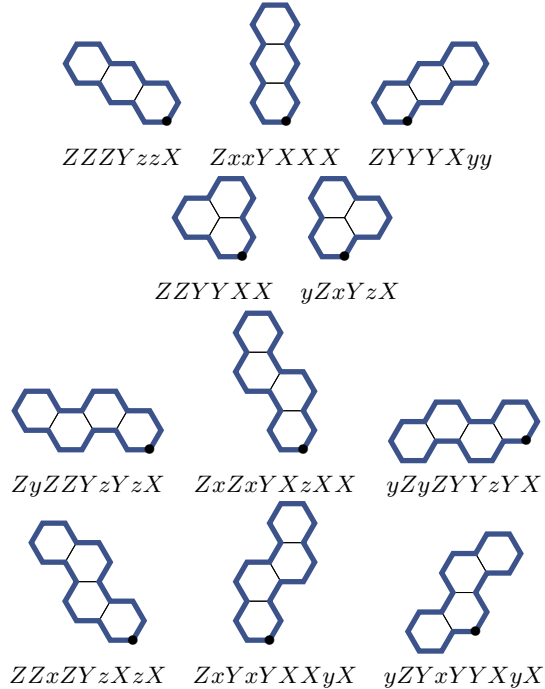

At this point, we can calculate that the tile group $T(\Sigma) = \{I\}$. This leads us to wonder whether $G$ might be $h(\Sigma)$, the tile homotopy group from Definition \ref{homotopy}. At a glance, this appears to be true. To check, we use the computer to check some (a few thousand) short examples. 

We see rapidly that there are some relations, such as $X^6$, whose product is $I$, and yet do not form a closed loop. We also find some relations, like such as $XYZ$, that do in fact return to their starting vertex and contribute $I$, but  have zero area (in terms of cells of the hexagonal grid). Rather than try to count paths by area, or figure out which loops correspond to regions of positive area, we take a different approach.

\begin{definition}
	Let $\tile{\partial R}$ be the \textbf{tile boundary} (similar to Definition \ref{conwaydef}). This is all cyclic permutations of $\partial R(e)$ for any edge $e$, as well as rotations by $120^\circ$ and all of these elements' inverses. 
\end{definition}

\begin{example}
	For the relation $YxZ$, then 
	\begin{align*}\tile{YxZ} = \{&YxZ, xZY, ZYx, & \text{cyclic permutations}\\
		&ZyX, yXZ, XZy, & \text{rotation by $120^\circ$}\\
		&XzY, zYX, YXz, & \text{rotation by $240^\circ$}\\
		&zXy, Xyz, yzX, & \text{inverses of line 1}\\
		&xYz, Yzx, zxY, & \text{inverses of line 3} \\
		&yZx, Zxy, xyZ \} & \text{inverses of line 2}
	\end{align*}
\end{example}

\subsection*{Methodology}

We first create a list of all words of length up to 15 in the elements $\{X, Y, Z, x, y,z\}$. Since the hexagonal grid has nice rotation symmetries, we can consider paths that start with $X$, so we append $X$ to the end of the generated words. This means we have examined all paths through edge length 32 that start and end at shaded vertices. Then we calculate the contribution of each possible path to see if it contributes $\pm I$, the positive or negative identity matrix. If it does, it is saved in a file for further processing. 

Once we have a list of all identity relations up to a given length, we start with the shortest length terms and append them to a reduction list. For example, since $YZX = 1$, then $x = YZ$, so any string containing $YZ$ could be shortened by replacing $YZ$ with $x$. So any string containing $YZ$ is eliminated from our minimal relation list. We continue this process of removing relations that could be rewritten and shortened by previous strings. We arrive at the list given in Table \ref{firstlist}. We have labeled the stone, bone, and snake tiles for convenience. 
Since each of these strings is related to a path on the hexagon lattice, we can also visualize them as in Figure \ref{ending points}. 
\begin{figure}[h!]
	\centering
	\includegraphics[width = .35\textwidth]{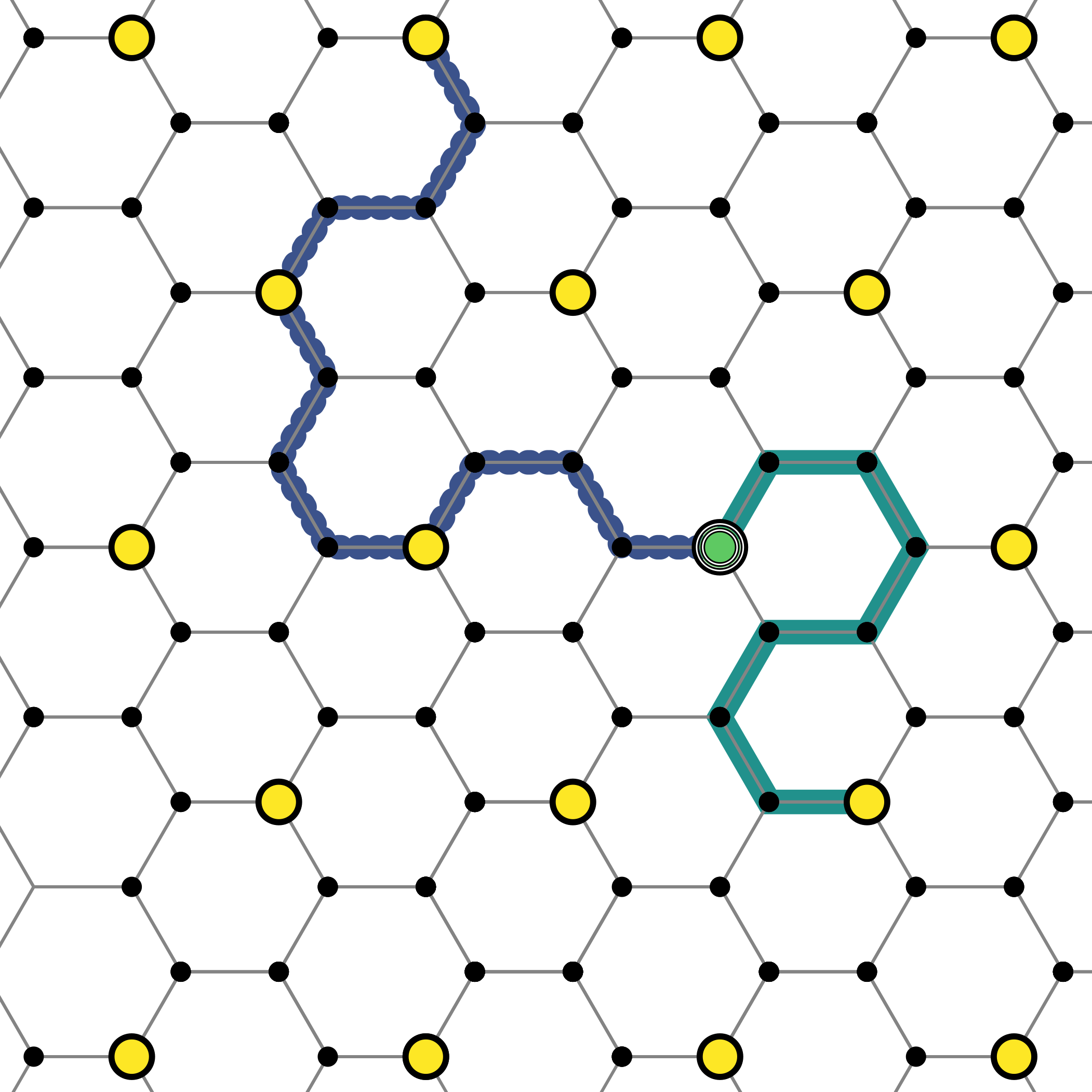}
	\caption[Start and end points of identity paths]{A path $P$ beginning at the green multiply-outlined vertex will only have that $P = I$ if it ends at one of the yellow vertices. An example of such a path is shown in teal with smooth edges, and another example in dark blue with bumpy edges.}
	\label{ending points}
	
\end{figure}

\begin{table}[h!]
	\centering
	$\begin{array}{c c}
		\tile{YZX}\\
		\tile{ZyzX}\\
		\tile{XZZyX}\\
		\tile{yZxYzX} & STONE\\
		\tile{yZZyyX}\\
		\tile{zzYYzX}\\
		\tile{ZyZYzYX} & (barbell)\\
		\tile{ZxxYXXX} & BONE\\
		\tile{yXXyXXyXX}\\
		\tile{XzXXzXXzX}\\
		\tile{XyZZxYYzX} & (2 by 2 by 2)\\
		\tile{yZyZYYzYX} & SNAKE \\
		\tile{ZZxZYzXzX} & SNAKE\\
	\end{array}$
	
	\caption[List of relations]{The candidate list of relations generated by computer experimentation, with tiles labeled}
	\label{firstlist}
\end{table}

Now, there are two important closed loops that still arise in this data, namely $ZyZYzYX$, which we call the barbell, and $zzYxxZyyX$, the 2 by 2 by 2 hexagon graph. This brings us to the following two lemmas, which have been a great help in terms of reducing the number of required relations.

	\begin{lemma}
		The hexagon barbell (as in Figure \ref{barbelltiling}) is signed-tilable by only bones and snakes, and contributes the identity matrix to a path product.
	\end{lemma}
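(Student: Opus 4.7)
The second assertion --- that $\partial R$ contributes $I$ --- follows immediately from Corollary \ref{tilingtotrace} once the first is established, so the whole task reduces to exhibiting an explicit signed tiling of the hexagon barbell using only bones and snakes.

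My first step would be to draw the barbell concretely by tracing the boundary word $ZyZYzYX$ from a fixed shaded vertex, thereby fixing the region and identifying its interior hexagons. I would then look for a signed tiling via an ``add-and-subtract'' strategy in the spirit of Example \ref{ex2x2x2}: place one or two snakes that sweep through a bulb and reach into the handle (perhaps spilling slightly outside the region), and then cancel the resulting excess on the handle and exterior with positively- and negatively-weighted bones. Because there are three orientations of the bone and six of the snake, and because signed tile coverings form a $\Z$-module, there is substantial flexibility in combining them; in practice a small number of tiles should suffice.

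Verification at this point is purely mechanical: for each hexagon in and near the region, tally the signed multiplicity of tiles containing it, and check that the total equals $1$ inside the barbell and $0$ outside. If my first attempt fails, I can perturb by inserting a $(+1, -1)$ bone pair, which never changes the signed coverage, or swap one snake orientation for another; symmetry of the barbell under $120^\circ$ rotation (inherited from the symmetry of the word $\tile{\cdot}$) should also cut down the search.

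The main obstacle I anticipate is conceptual: each bulb of the barbell looks superficially like a stone region, and by Theorem \ref{trace2} a lone stone has $\partial R = -I$ and therefore admits no bone-and-snake signed tiling. The barbell evades this obstruction precisely because the handle links the two bulbs, so a single snake can thread through one bulb into the handle and the opposite snake can mirror this on the other side, making the two stone-like obstructions cancel in pairs. Pinpointing the exact tile configuration that realizes this cancellation is the crux of the argument; once it is in hand, the matrix statement is a one-line appeal to Corollary \ref{tilingtotrace}.
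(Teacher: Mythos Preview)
Your plan is exactly the paper's approach: exhibit an explicit signed tiling of the barbell using only bones and snakes (the paper does this by pointing to Figure~\ref{barbelltiling}), then invoke Corollary~\ref{tilingtotrace} to conclude $\partial R = I$. One small caution: the $\tile{\cdot}$ notation collects all $120^\circ$ rotates of a word as \emph{distinct} boundary representatives, so it does not imply that the barbell region itself has $120^\circ$ rotational symmetry --- drop that remark, but otherwise your outline is on target.
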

	
	\begin{proof}
		See Figure \ref{barbelltiling} for an example of a signed tiling of the barbell shape. Therefore, by Corollary \ref{tilingtotrace}, $\partial R = I$. 
		\begin{figure}[h!]
			\centering
			\includegraphics[width = 0.75\textwidth]{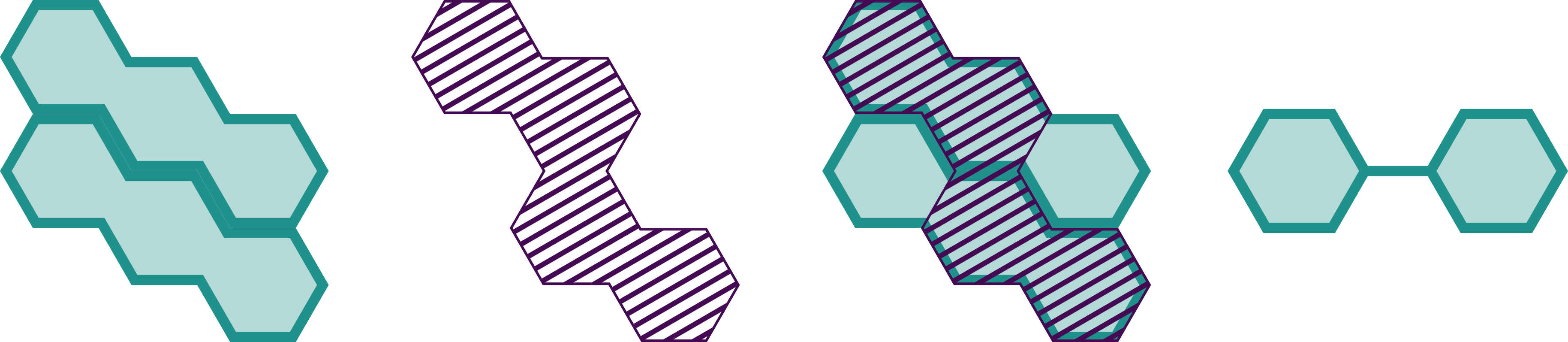}
			\caption{A signed tiling of the `barbell' shape}
			\label{barbelltiling}
		\end{figure}
	\end{proof}

	\begin{lemma}
		The loop around a 2 by 2 by 2 region on a hexagonal grid (as in Figure \ref{2x2x2}) is signed-tilable by only bones and snakes, and contributes the identity matrix to a path product.
	\end{lemma}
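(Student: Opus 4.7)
The plan is to mirror the proof of the barbell lemma immediately preceding, leveraging the explicit signed tiling already exhibited in the rightmost panel of Figure \ref{2x2x2}. That panel, as highlighted in Example \ref{ex2x2x2}, displays a signed tiling of the $2 \times 2 \times 2$ hexagonal region consisting of a snake and three bones placed with positive weight together with two bones placed with negative weight; crucially, no stone appears. So the substantive content of the lemma has already been produced earlier in the paper, and the task reduces to citing that figure and applying the appropriate corollary.

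Concretely, I would write: let $R$ be the $2 \times 2 \times 2$ hexagonal region and exhibit the signed tiling of $R$ shown in the third panel of Figure \ref{2x2x2}, noting that every tile appearing is either a bone or a snake (some with weight $+1$ and some with weight $-1$). Verification that this is indeed a signed tiling in the sense of Definition \ref{signed tiling} amounts to checking that the weighted sum at each interior hexagon of $R$ is $1$ and the weighted sum at each hexagon outside $R$ is $0$; this is immediate from the figure and need not be belabored. Then apply Corollary \ref{tilingtotrace}, which states that whenever a region admits a signed tiling using only bones and snakes, $\partial R$ contributes $I$. This gives both halves of the lemma simultaneously.

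The only potentially delicate point is that Corollary \ref{tilingtotrace} was proved via Theorem \ref{trace2}, whose inductive argument requires that tiles be removed one at a time from the boundary of a simply-connected region without disconnecting it or creating a puncture. Example \ref{ex2x2x2} explicitly warns that not every signed tiling of the $2 \times 2 \times 2$ region admits such a boundary-respecting removal order: the middle panel of Figure \ref{2x2x2} is a counterexample. So I would briefly justify that the particular tiling in the rightmost panel \emph{does} admit a legitimate removal order, by indicating the sequence in which the tiles can be stripped (peeling off each bone or snake from the current boundary, staying simply-connected throughout). This is the only real obstacle, and it is a purely verification-level obstacle resolved by inspection of the figure.

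Thus the proof is essentially one sentence of exhibition plus one sentence of citation, mirroring the barbell lemma precisely; the only extra care needed is to confirm that the exhibited tiling is compatible with the inductive framework of Theorem \ref{trace2}, which is straightforward given the geometry of the region.
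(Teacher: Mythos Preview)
Your proposal is correct and follows essentially the same approach as the paper: exhibit the stone-free signed tiling in the rightmost panel of Figure~\ref{2x2x2} and invoke Corollary~\ref{tilingtotrace}. The paper's own proof is even more terse (one sentence of exhibition plus one of citation) and does not pause to verify a boundary-respecting removal order; your extra paragraph on that point is a reasonable bit of care, but not something the paper itself supplies.
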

	
	\begin{proof}
		See Figure \ref{2x2x2} for an example of a signed tiling of the 2 by 2 by 2 shape. Therefore, by Corollary \ref{tilingtotrace}, $\partial R = I$. 
	\end{proof}

We are now left with a remaining seven words that do not correspond to a closed loop. We check each of these words by hand, seeing how and whether they can be reduced or rewritten. The results of this calculation is shown in Table \ref{reduction}.

\begin{table}[h!]
	\footnotesize
	\begin{align*}
		YZX &= -YYY & \text{ since $ZX = -YY$ from the bone relation}\\
		\\
		ZyzX &= yzXZ & \text{ cyclic permutation} \\
		&= -yzXzz & \text{ since $Z = -zz$ from $XXX = -1$}\\
		& = -xZxZ & \text{ from the snake relation $yzXzzxZxZ = 1$}\\
		\\
		XZZyX &= X(-XY)yX & \text{ since $ZZ = -XY$ from bone relation} \\
		&= -XXX \\
		\\
		yZZyyX &= y(-XY)yyX & \text{ since $ZZ = -XY$ from bone relation} \\
		&= -yXyX \\
		\\
		zzYYzX &= zz(-ZX)zX & \text{ since $YY = -ZX$ from bone relation} \\
		&= -zXzX \\
		\\
		yXXyXXyXX &= y(-YZ)y(-YZ)y(-YZ) & \text{ since $XX = -YZ$ from bone relation} \\
		& = -yyy \\
		\\
		XzXXzXXzX &= z(-YZ)z(-YZ)z(-YZ) & \text{ since $XX = -YZ$ from bone relation} \\
		&= -zYYYZ \\
		& = -YYY & \text{ cyclic permutation (and $Zz = 1$)}
	\end{align*}
	\caption{Reduction of each open string from Table \ref{firstlist}}
	\label{reduction}
\end{table}

So each of the non-closed paths ends at either $\tile{XXX}$ or $\tile{zXzX}$. These are two very important strings which come from `half' of the bone and snake relations, respectively. $\tile{XXX} = -1 = \tile{zXzX}$. We can then verify that the other `half' of each bone and snake tile is also $-1$ in the matrix product. So our group can be written in one of the two ways given in Conjecture \ref{finitegroup}. 

\begin{conjecture}
	Let $I$ be the $2 \times 2$ identity matrix, and $-I$ be the $2 \times 2$ matrix with $-1$ on the diagonal. Then $G$ has either of the following two finite presentations:
	$${G} = \lr{X, Y, Z \vert \tile{XXX} = \tile{yXyX} = \tile{yzYX} = \tile{YzzX} = \tile{yZxYzX} = -I}$$ ~or, equivalently~ $${G} = \lr{X, Y, Z \left\vert \begin{matrix}
			\tile{ZxxYXXX} = \tile{yZyZYYzYX} = \tile{ZZxZYzXzX} = I, \\
			\tile{XXX} = \tile{yZxYzX} = \tile{yXyX} = -I\end{matrix}\right.}$$ 
		\label{finitegroup}
\end{conjecture}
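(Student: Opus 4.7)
The plan is to split the proof into three pieces: verifying that the listed relations hold in $G$ (soundness), showing they are sufficient to determine $G$ (completeness), and showing the two proposed presentations are equivalent.

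For soundness, the full-tile relations $\tile{ZxxYXXX} = \tile{yZyZYYzYX} = \tile{ZZxZYzXzX} = I$ appearing in the second presentation are already established by the Remark before Lemma~\ref{outandback}, since these are precisely the tile contributions computed in Figure~\ref{alltiles}. The remaining ``half-tile'' relations $\tile{XXX} = -I$, $\tile{yXyX} = -I$, $\tile{yzYX} = -I$, $\tile{YzzX} = -I$, and $\tile{yZxYzX} = -I$ are routine to verify by direct matrix computation in $\SL_2(\C)$; rotation symmetry of the grid together with the symmetric action defining $\tile{\cdot}$ means one representative per tile class suffices.

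For completeness, the task is to show that the abstract group $\tilde{G}$ given by the presentation equals $G$, not merely surjects onto it. I would compute $|G|$ directly by enumerating matrix products until the generated set stabilizes, and independently apply the Todd--Coxeter coset enumeration algorithm to $\tilde{G}$. Since soundness gives a surjection $\tilde{G} \twoheadrightarrow G$, matching orders would force an isomorphism. An alternative and potentially cleaner route is to identify $G$ explicitly as a known finite subgroup of $\SL_2$: its matrix entries lie in $\Z[\zeta_{12}]$, so $G$ embeds into $\SL_2(R)$ for suitable finite quotient rings $R$, and one may hope to recognize it as a central extension of a small simple group or as $\SL_2$ over a small finite field, in which case a standard presentation of that group would immediately imply the conjecture.

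The equivalence of the two presentations is a Tietze-transformation exercise. Each full-tile relation factors as a product of two half-tile relations (as reflected in the reductions in Table~\ref{reduction}), so the first presentation implies the second. In the reverse direction, the second presentation already contains $\tile{XXX}$, $\tile{yXyX}$, and $\tile{yZxYzX}$ among its $-I$ relations, so only $\tile{yzYX} = -I$ and $\tile{YzzX} = -I$ remain to be derived, and this should follow by manipulations closely analogous to those of Table~\ref{reduction}. The main obstacle is completeness: the computer search to length $32$ is suggestive but not a proof, and Todd--Coxeter is not guaranteed to terminate quickly. Absent a concrete identification of $G$ with a known group, the most satisfying route would be a genuinely geometric argument showing that every closed hexagonal-grid path with matrix contribution $\pm I$ admits a signed tiling by stones, bones, and snakes — essentially establishing a converse to Theorem~\ref{trace2} — and then invoking Lemma~\ref{outandback} to conclude that the relations coming from the tile boundaries generate everything.
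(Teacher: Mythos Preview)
The statement you are attempting to prove is labeled a \emph{Conjecture} in the paper, and the paper does not prove it. What precedes it is a body of computational evidence: the length-$32$ search producing Table~\ref{firstlist}, the signed-tiling arguments for the barbell and the $2\times 2\times 2$ region, and the reductions of Table~\ref{reduction}. These together motivate the conjectured presentation, but the author explicitly leaves it open.

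Your proposal is accordingly not a proof but a plan, and you yourself identify the gap: completeness. Soundness is indeed routine and matches what the paper already verifies. The equivalence of the two presentations via Tietze moves is plausible and in the spirit of Table~\ref{reduction}. But for completeness you offer only strategies (Todd--Coxeter enumeration, identification of $G$ with a known finite subgroup of $\SL_2$, or a geometric converse to Theorem~\ref{trace2}), none of which you carry out. Any one of these, if executed, would go beyond the paper; as written, your proposal does not close the gap that makes this a conjecture rather than a theorem.
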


\printbibliography

\end{document}